\documentclass[journal,onecolumn,web]{ieeecolor}
\usepackage{generic}
\usepackage{cite}
\usepackage{amsmath,amssymb,amsfonts}
\usepackage{algorithmic}
\usepackage{graphicx}
\usepackage{algorithm,algorithmic}
\usepackage{hyperref}
\usepackage{amssymb}
\usepackage{verbatim}
\usepackage{mathrsfs}

\usepackage{booktabs}

  \makeatletter
  \let\NAT@parse\undefined
  \makeatother

\usepackage{tikz}
\usetikzlibrary{arrows, shapes, chains}
\tikzstyle{startstop} = [rectangle,rounded corners, minimum width=3cm,minimum height=1cm,text centered, draw=black,fill=red!30]
\tikzstyle{io} = [trapezium, trapezium left angle = 70,trapezium right angle=110,minimum width=3cm,minimum height=1cm,text centered,draw=black,fill=blue!30]
\tikzstyle{process} = [rectangle,minimum width=5cm,minimum height=2cm,text centered, text width =5cm,draw=black,fill=white]
\tikzstyle{decision} = [diamond,minimum width=3cm,minimum height=1cm,text centered,draw=black,fill=green!30]
\tikzstyle{arrow} = [thick,->,>=stealth]
\usepackage{subcaption}
\usepackage{amsthm}
  \theoremstyle{plain}
  \newtheorem{theorem}{Theorem}
  \newtheorem{lemma}[theorem]{Lemma}
  \newtheorem{proposition}[theorem]{Proposition}
  \newtheorem{corollary}[theorem]{Corollary}
  \newtheorem{assumption}{Assumption}
  \newtheorem{definition}{Definition}
  \newtheorem{example}{Example}
  
  \theoremstyle{remark}
  \newtheorem{remark}{Remark}
\hypersetup{hidelinks=true}

\usepackage{textcomp}
\def\BibTeX{{\rm B\kern-.05em{\sc i\kern-.025em b}\kern-.08em
    T\kern-.1667em\lower.7ex\hbox{E}\kern-.125emX}}
\markboth{\hskip25pc IEEE TRANSACTIONS AND JOURNALS TEMPLATE}
{   WANG \MakeLowercase{\textit{et al.}}: An Optimization-Based Framework for Solving Forward-Backward Stochastic Differential Equations: Convergence Analysis and Error
Bounds}

\sloppy
\allowdisplaybreaks[4]
\raggedbottom

\begin{document}
\title{{An Optimization-Based Framework for Solving Forward-Backward Stochastic Differential Equations: Convergence Analysis and Error Bounds}}
\author{Yutian Wang, Yuan-Hua Ni, Xun Li,
\thanks{Yutian Wang is with the Department of Applied Mathematics, The Hong Kong Polytechnic University, China. Email: {\tt yutian.wang@connect.\linebreak polyu.hk}.
Yuan-Hua Ni is with the College of Artificial Intelligence, Nankai University, Tianjin, China. Email: {\tt yhni@nankai.edu.cn}.
Xun Li is with the Department of Applied Mathematics, The Hong Kong Polytechnic University, China. Email: {\tt li.xun@polyu.edu.hk}.
}
}

\maketitle

\begin{abstract}
Forward-backward stochastic differential equations have recently
become a key focus in the computational field, and their role in
continuous-time stochastic optimal control and reinforcement learning
has grown increasingly prominent.
In this paper, we develop an optimization-based framework for solving
coupled forward-backward stochastic differential equations,
which naturally arise in stochastic optimal control through the
stochastic maximum principle and related Hamiltonian systems. We
introduce an integral-form objective function and prove its
equivalence to the error between consecutive Picard iterates. Our
convergence analysis establishes that minimizing this objective
generates sequences that converge to the true solution.
We provide explicit upper and lower bounds that relate the objective value to the error between trial and exact solutions.
We validate the proposed objective and its theoretical interpretation
using two analytical test cases,
and further illustrate its numerical applicability on a nonlinear stochastic optimal control problem with up to 1000 dimensions.
\end{abstract}

\begin{IEEEkeywords}
forward-backward SDE, stochastic optimal control,
stochastic maximum principle, Picard iteration, numerics
\end{IEEEkeywords}

\section{Introduction}
\label{section:introduction}
Forward-backward stochastic differential equations (FBSDEs) are
coupled systems of stochastic differential equations that evolve both
forward and backward in time. Their significance in stochastic control
theory is well established: the stochastic maximum principle reveals
that the necessary conditions for optimality can be equivalently
formulated as an FBSDE system, which incorporates the state, adjoint
processes, and Hamiltonian dynamics
\cite{yongStochasticControlsHamiltonian1999}.
FBSDEs also underpin applications ranging from
mathematical finance, where they overcome limitations of the classical
Black--Scholes model by incorporating features such as default risk and
non-tradable assets. For reinforcement learning, they
can characterize value functions and policy dynamics
\cite{eMultilevelPicardNumerical2019,jiaPolicyGradientActorcritic2022,
jiaQlearningContinuousTime2023,hamblyRecentAdvancesReinforcement2023}.
This highlights the need for efficient and reliable
computational approaches, especially given the high dimensionality and
nonlinearity typical of practical problems.

Like partial differential equations (PDEs), analytically solving
FBSDEs is often intractable, and numerical methods are inevitable for
examining the solutions. Early studies of numerical methods emerged
soon after the general theory of nonlinear backward stochastic differential equations (BSDEs)
\cite{pardouxAdaptedSolutionBackward1990,pardouxBackwardStochasticDifferential1992}. From
a computational viewpoint, we can classify these numerical methods
into two categories: the PDE-based approach
\cite{maSolvingForwardbackwardStochastic1994,douglasNumericalMethodsForwardbackward1996}
and the conditional expectation approach
\cite{bouchardDiscretetimeApproximationMonteCarlo2004,zhangNumericalSchemeBSDEs2004,benderTimeDiscretizationMarkovian2008}. The
PDE-based approach relates FBSDEs to associated PDEs and relies on
numerical methods for PDEs to obtain the solution of FBSDEs. It is
worth noting that the converse application is also valid via nonlinear
Feynman-Kac formulae, where a numerical method for FBSDEs naturally
leads to an equivalent method for a certain class of PDEs. The
conditional expectation approach, on the other hand, works by
discretizing the time axis and expressing solutions recursively via
conditional expectations. With recent advances in deep learning, there
are also studies applying neural networks to solving PDEs and
computing conditional expectations
\cite{hanSolvingHighdimensionalPartial2018,beckMachineLearningApproximation2019,hureDeepBackwardSchemes2020}. For
an overview of existing numerical methods for FBSDEs, we refer to the
comprehensive survey by
\cite{chessariNumericalMethodsBackward2023}.

This work is closely related to several optimization-based BSDE methods.
The deep BSDE
method \cite{hanSolvingHighdimensionalPartial2018} solves
nonlinear BSDEs by treating the initial value \(Y_0\) of the backward process
and the control process \(Z\) as decision variables,
minimizing the terminal error. The martingale method
\cite{jiaPolicyEvaluationTemporaldifference2022} solves linear
BSDEs by treating the backward process \(Y\) as the decision variable,
minimizing an integral-form objective function. A notable property of
the objective function of martingale method is that it strictly equals
the mean squared error between the trial solution \(Y\) and the true
solution \(Y^*\). Motivated by this property,
\cite{wangDeepBSDEMLLearning2022} proposes a deep BSDE variant by
dropping \(Y_0\) and only optimizing the backward process \(Z\). Their
objective function is also proven equal to the mean squared error
between the trial solution \(Z\) and the true solution
\(Z^*\). \cite{anderssonConvergenceRobustDeep2023} further develop
this idea to coupled FBSDEs arising in stochastic optimal control problems.
These works show that FBSDEs can be solved effectively through optimization,
but they are introduced through problem-specific parameterizations and losses.

This work aims to develop a unified optimization-based framework
for solving FBSDEs without explicit time discretization.
By treating
both the backward process \(Y\) and the control process \(Z\) as
decision variables,
while letting the forward process $X$ be induced by the forward SDE,
we can obtain the \emph{true solution} \((Y^*,Z^*)\)
by minimizing an integral-form objective function over
\emph{trial solutions} \((Y,Z)\). This objective function,
called \emph{backward measurability loss} (BML),
is proven to equal the error between \((Y,Z)\) and
\(\Phi(Y,Z)\), where \(\Phi\) denotes the Picard operator for FBSDEs.
The BML concept originates from
\cite{wangProbabilisticFrameworkHowards2023}, initially defined
for linear BSDEs in policy evaluation.
Our framework thus
unifies the deep BSDE and the martingale method, extends them
to a general form of coupled FBSDEs, and recovers them under specific
trial solution designs. Certain deep BSDE method variants are likewise
encompassed.

Another major concern of this work is the convergence behavior during
optimization.
In existing optimization-based approaches, even if the
objective function is designed to vanish at the true solution, it is
generally unclear whether a trial solution with a small objective value
is actually close to the true solution, especially for coupled FBSDEs.
When the trial solution \(Y\) and/or \(Z\) is updated iteratively by
minimizing a loss function, the way in which the iterates approach the
true solution \((Y^*,Z^*)\) remains largely unclear. Existing
mean-squared error interpretations
\cite{jiaPolicyEvaluationTemporaldifference2022,wangDeepBSDEMLLearning2022}
apply only to certain linear BSDEs. Moreover, to the best of our
knowledge, among works on coupled FBSDEs, error-bound guarantees have
only been established in
\cite{benderTimeDiscretizationMarkovian2008,hanConvergenceDeepBSDE2020},
where the analysis is restricted to cases in which the forward equation
does not contain \(Z\). As a result, for more general coupled FBSDEs,
there is still limited theory explaining whether a small loss value
indeed implies that the trial solution is close to the true solution.

We leverage the Picard interpretation of our objective function to
analyze the convergence behavior, which is proven valid for
general coupled FBSDEs. By exploiting the Lipschitz continuity of the
Picard operator, we establish convergence results that guarantee that
minimizing the proposed objective function yields sequences converging
to the true solution. We also derive error bounds relating the
objective value of trial solutions to their distance from the true
solution. The theory is developed in continuous time, and time
discretization is introduced only for integral estimations. This
treatment simplifies analysis and yields a numerical method agnostic to
time discretization schemes. To our knowledge, these convergence
results and error bounds---natural consequences of the Picard
interpretation---are novel. Compared with analyses requiring weak
coupling conditions
\cite{benderTimeDiscretizationMarkovian2008,hanConvergenceDeepBSDE2020},
our convergence and error analysis is established under the same contraction-type assumptions used in the standard fixed-point well-posedness theory recalled in the appendix;
for the decoupled case, this reduce to the standing assumption for BSDEs.

\emph{Contributions.} Our main contributions are summarized as
follows. First, we propose the BML value to quantify how well a trial
solution satisfies an FBSDE, and prove it equal to the error between
consecutive points in Picard iteration. Second, we develop an
optimization-based framework for solving FBSDEs by minimizing
the BML value. This framework unifies existing methods in the
literature and extends them to general coupled FBSDEs. Third, we
establish convergence results and error bounds in terms of the
objective function to be optimized.
For general coupled FBSDEs we prove a subsequential convergence result and a one-sided error bound under standard contraction assumptions,
while for the decoupled case we derive two-sided error bounds.
We further validate the proposed objective design and its theoretical interpretation on carefully designed analytical and numerical examples.

\emph{Organizations.} The rest of this paper is organized as
follows.
Section~\ref{section:preliminaries} introduces the notation, the solution space, and the Picard operator in the general coupled FBSDE setting, with the decoupled/BSDE case treated as a special case.
In Section~\ref{section:main-results}, we present the main results of this
paper, including the definition of BML value and its theoretical
properties. In Section~\ref{section:optimization-based-solver}, we describe the
proposed optimization-based framework for solving FBSDEs and show how
it relates to existing methods. Two examples are analytically examined
for demonstration purposes. In Section~\ref{section:experiments}, we
numerically revisit these examples to validate our main
results. We also test the framework on a high-dimensional FBSDE
derived from a nonlinear Hamilton-Jacobi-Bellman (HJB) equation in up to 1000 dimensions. In
Section~\ref{section:conclusion}, we conclude this paper and highlight
future directions. The appendix contains supplementary materials and
technical details omitted in the main text.

\section{Preliminaries}
\label{section:preliminaries}
In this section,
we introduce the notation used throughout the paper,
and formulate the Picard operator $\Phi$ for a coupled FBSDE.
Standard existence and uniqueness results of the considered FBSDE are recalled in the appendix for completeness;
see also \cite{yongStochasticControlsHamiltonian1999,maForwardbackwardStochasticDifferential2007,phamContinuoustimeStochasticControl2009}.

\subsection{Notation}
We adopt notation in the monograph
\cite{maForwardbackwardStochasticDifferential2007}.

\begin{enumerate}
\item Let \(\mu\) denote the Lebesgue measure on the real line.

\item For \(x,y \in \mathbb{R}^{m \times d}\), let \(\langle x, y\rangle:= \operatorname{tr}(x^\intercal y)\) and
\(|x|:=\sqrt{\langle x,x\rangle}\).

\item Let \((\Omega, \mathcal{F}, \mathbb{P})\) be a probability space supporting a standard
\(d\)-dimensional Brownian motion \(W\).

\item Let \(\mathbb{F}=\{\mathcal{F}_t\}_{0\leq t\leq T}\) be the natural filtration generated by
\(W\).

\item Let \(L^2_{\mathcal{G}}(\Omega;N)\) be the set of random variables \(f:\Omega\to N\)
satisfying the following conditions: (i) \(f\) is measurable with
respect to the \(\sigma\)-algebra \(\mathcal{G}\), and (ii) \(f\) is square
integrable, i.e., \(\operatorname{\mathbb{E}}|f|^2 < \infty\).

\item Let \(L^2_{\mathcal{F}}(0,T;N)\) be the set of stochastic processes \(X:\Omega\times[0,T]\to
   N\) satisfying the following conditions: (i) \(X\) is
\(\mathbb{F}\)-progressively measurable, and (ii) \(\int_0^T
   \operatorname{\mathbb{E}}|X_t|^2\,dt < \infty\).

\item Let \(L^2_{\mathcal{F}}(\Omega; C[0,T];N)\) be the set of \emph{continuous} stochastic
processes \(X:\Omega\times[0,T]\to N\) satisfying the following conditions: (i)
\(X\) is \(\mathbb{F}\)-progressively measurable, and (ii)
\(\operatorname{\mathbb{E}}\sup_{t\in[0,T]}|X_t|^2 < \infty\).

\item Let \(L^2_{\mathcal{F}}(0,T;W^{1,\infty}(M;N))\) be the set of functions \(f:\Omega\times[0,T]\times
   M\to N\) satisfying the following conditions: (i) \(f(t,\theta)\) is
Lipschitz continuous with respect to \(\theta\), i.e., there exists a
constant \(L_f\) such that for any \(\theta_1,\theta_2\in M\), the inequality
\(|f(t,\theta_1) - f(t,\theta_2)| \leq L_f|\theta_1 - \theta_2|\) holds almost everywhere on
\(\Omega\times[0,T]\), (ii) \(f\) is \(\mathbb{F}\)-progressively measurable for any fixed
\(\theta\), and (iii) if \(\theta=0\) is fixed, then \(f\in L^2_{\mathcal{F}}(0,T;N)\).

\item Let \(L^2_{\mathcal{F}_T}(\Omega;W^{1,\infty}(M;N))\) be the set of functions \(g:\Omega\times M\to N\)
satisfying the following conditions: (i) \(g(\theta)\) is Lipschitz
continuous with respect to \(\theta\), i.e., there exists a constant \(L_g\)
such that for any \(\theta_1,\theta_2\in M\), the inequality \(|g(\theta_1) - g(\theta_2)| \leq
    L_g|\theta_1 - \theta_2|\) holds almost surely, (ii) \(g\) is \(\mathcal{F}_T\)-measurable
for any fixed \(\theta\), and (iii) if \(\theta=0\) is fixed, then \(g\in
    L^2_{\mathcal{F}_T}(\Omega;N)\).
\end{enumerate}
In the above notation, \(M\) and \(N\) can be any Euclidean spaces with
suitable dimensions.

\subsection{The Picard Operator for FBSDEs}
Consider the general nonlinear FBSDE for \( t\in[0,T]\):
\begin{equation} \left\{
\begin{aligned}
X_t &= x_0 + \int_0^t b(s,X_s,Y_s,Z_s)\,ds + \int_0^t \sigma(s, X_s,Y_s,Z_s)\,dW_s, \\
Y_t &= g(X_T) + \int_t^T f(s,X_s, Y_s,Z_s)\,ds - \int_t^T Z_s\, dW_s,
\end{aligned} \right.
\label{eq:FBSDE}
\end{equation}
where the \emph{forward process} \(X\) is valued in \(\mathbb{R}^n\), the \emph{backward
process} \(Y\) is valued in \(\mathbb{R}^m\),
the \emph{control process} \(Z\) is valued in \(\mathbb{R}^{m \times d}\),
$x_0$ is a constant,
$g$ is a function defined on $\Omega \times \mathbb{R}^n$,
and $b,\sigma,f$ are functions defined on $\Omega\times[0,T]\times \mathbb{R}^n \times \mathbb{R}^m \times \mathbb{R}^{m \times d}$.

\begin{definition}[Solution of FBSDEs]
A triple of processes \((X, Y, Z) \in
L^2_{\mathcal{F}}(\Omega; C[0,T];\mathbb{R}^n) \times L^2_{\mathcal{F}}(\Omega; C[0,T];\mathbb{R}^m) \times L^2_{\mathcal{F}}(0,T;\mathbb{R}^{m \times
d})\) is called an \emph{adapted} solution of \eqref{eq:FBSDE} if for any \(t\in[0,T]\)
the equality holds almost surely.
\end{definition}

\begin{assumption}[Standing Assumption for FBSDEs]
\label{assumption:standing-assumption-for-FBSDEs}
Assume \(g \in
L^2_{\mathcal{F}}(\Omega;W^{1,\infty}(\mathbb{R}^n;\mathbb{R}^m)\)
and $$ \left\{ \begin{aligned} b &\in
  L^2_{\mathcal{F}}(0,T;W^{1,\infty}(\mathbb{R}^n\times\mathbb{R}^m\times\mathbb{R}^{m\times
    d};\mathbb{R}^n),\\ \sigma &\in
  L^2_{\mathcal{F}}(0,T;W^{1,\infty}(\mathbb{R}^n\times\mathbb{R}^m\times\mathbb{R}^{m\times
    d};\mathbb{R}^{m \times d}),\\ f &\in
  L^2_{\mathcal{F}}(0,T;W^{1,\infty}(\mathbb{R}^n\times\mathbb{R}^m\times\mathbb{R}^{m\times
    d};\mathbb{R}^{m}).  \end{aligned} \right.
$$
\end{assumption}

Under Assumption~\ref{assumption:standing-assumption-for-FBSDEs}, the \textbf{Picard operator} \(\Phi\) for
FBSDE~\eqref{eq:FBSDE} can be defined as follows.
For any
\((\tilde{y},\tilde{z})\in L^2_{\mathcal{F}}(\Omega; C[0,T];\mathbb{R}^m) \times
L^2_{\mathcal{F}}(0,T;\mathbb{R}^{m \times d})\), let \(\widetilde{X}\)
satisfy
\begin{equation}
\label{eq:def-widetilde-X}
\widetilde{X}_t = x_0 +
\int_0^t b(s,\widetilde{X}_s,\tilde{y}_s,\tilde{z}_s)\,ds + \int_0^t \sigma(s,
\widetilde{X}_s,\tilde{y}_s,\tilde{z}_s)\,dW_s
\end{equation}
for all $t\in[0,T]$.
Under the current assumption, this SDE admits a unique strong
solution.
Fixing the forward process to $\widetilde{X}$ yields the following linear BSDE,
\begin{equation}
\label{eq:def-widetilde-Y-Z}
\widetilde{Y}_t = g(\widetilde{X}_T) + \int_t^T f(s,\widetilde{X}_s,
\tilde{y}_s,\tilde{z}_s)\,ds - \int_t^T \widetilde{Z}_s\, dW_s.
\end{equation}
By the standard existence and uniqueness result for BSDEs,
the solution of \eqref{eq:def-widetilde-Y-Z} uniquely exists.
Define the Picard operator $\Phi$ by
\begin{equation}
\label{eq:Picard-definition-for-FBSDE}
\Phi(\tilde{y},\tilde{z}):= (\widetilde{Y},\widetilde{Z}),
\end{equation}
where \(\widetilde{Y}\) and \(\widetilde{Z}\) are determined by
\eqref{eq:def-widetilde-X}--\eqref{eq:def-widetilde-Y-Z}.

\textbf{The decoupled case.}
A particularly important special case is the decoupled setting,
in which the forward coefficients $b$ and $\sigma$ in \eqref{eq:FBSDE} do not depend on $(Y,Z)$.
Then the forward SDE can be solved independently,
and substituting the resulting forward process into the backward equation reduces the problem to a BSDE.

\subsection{Norms on the Solution Space}
The optimization variable in our framework is the pair \((Y,Z)\),
while the corresponding forward process is induced by the forward SDE.
We work on the solution space
\[
\mathcal{M}[0,T]
:=
L^2_{\mathcal{F}}(\Omega; C[0,T];\mathbb{R}^m)
\times
L^2_{\mathcal{F}}(0,T;\mathbb{R}^{m \times d}).
\]
This is a Banach space when equipped with the standard norm
\cite[p.~355]{yongStochasticControlsHamiltonian1999}
\begin{equation}
\label{eq:standard-norm}
\|(Y, Z)\| :=
\left\{ \mathbb{E}\sup_{t\in[0,T]}|Y_t|^2 + \mathbb{E}\int_0^T|Z_t|^2\,dt \right\}^{1/2}.
\end{equation}

In addition to the standard norm, we shall also use the following two norms.

\begin{itemize}
\item \emph{The \(\beta\)-norm:} for \(\beta \in \mathbb{R}\),
\begin{equation}
\label{eq:beta-norm}
\|(Y,Z)\|_\beta := \left\{ \mathbb{E} \int_0^T e^{2\beta t}\bigl(|Y_t|^2+|Z_t|^2\bigr)\, dt\right\}^{1/2}.
\end{equation}

\item \emph{The sup-norm:}
\begin{equation}
\label{eq:sup-norm}
\|(Y,Z)\|_\text{sup} :=
\sup_{t\in[0,T]} \left\{ \mathbb{E} |Y_t|^2 + \mathbb{E}\int_t^T |Z_s|^2\,ds
\right\}^{1/2}.
\end{equation}
\end{itemize}

These norms arise naturally in the well-posedness theory of BSDEs and
FBSDEs. In particular, all \(\beta\)-norms are equivalent for different
choices of \(\beta\in\mathbb{R}\), and each of them is weaker than the
standard norm \(\|\cdot\|\). The sup-norm is especially useful in the
analysis of coupled FBSDEs via contraction arguments on small time
intervals. Standard existence and uniqueness results under these norms
are recalled in Appendix~\ref{appendix:existence-results}.

In the next section,
we shall also consider another norm to study the residual interpretation of our objective function.

\section{Main Results}
\label{section:main-results}
In this section, we present our main theoretical results, establishing
a rigorous foundation for an optimization-based framework to solve the
coupled FBSDE~\eqref{eq:FBSDE}.
Specifically, we quantify \emph{how well} a point in the solution space
(called a trial solution) fits a FBSDE by an integral-form value
(called the BML value), and justify it by the fixed point equation of
the Picard operator. Then, we show that any trial solution solves the
considered FBSDE if and only if its BML value equals zero. Moreover,
we prove that a convergent sequence of trial solutions with vanishing
BML values must converge to the true solution. Furthermore, we provide
error bounds to quantify \emph{how close} a trial solution is to the true
solution by its BML value.

Throughout this section, we let
Assumption~\ref{assumption:standing-assumption-for-FBSDEs} hold,
guaranteeing that the Picard operator is well-defined.

\subsection{The BML Value and Its Picard Interpretation}
Let \(\mu\) denote the Lebesgue measure on the real line.

\begin{definition}[BML Value]
For a given trial solution \((\tilde{y},
\tilde{z})\in\mathcal{M}[0,T]\), its BML value for
FBSDE~\eqref{eq:FBSDE} is defined as
\begin{equation}
\label{eq:def-BML}
\operatorname{BML}(\tilde{y},\tilde{z}) := \mathbb{E} \int_0^T
|R_t|^2\,\mu(dt),
\end{equation}
where \(R\) is the backward residual error process (not
necessarily adapted) defined for all \(t\in[0,T]\) by
\begin{equation}
\label{eq:def-residual-error-process}
\left\{ \begin{aligned}
\widetilde{X}_t &= x_0 + \int_0^t
b(s,\widetilde{X}_s,\tilde{y}_s,\tilde{z}_s)\,ds + \int_0^t \sigma(s,
\widetilde{X}_s,\tilde{y}_s,\tilde{z}_s)\,dW_s, \\
R_t &= \tilde{y}_t - \biggl(g(\widetilde{X}_T) + \int_t^T f(s,
\widetilde{X}_s, \tilde{y}_s, \tilde{z}_s)\,ds - \int_t^T \tilde{z}_s
\,dW_s \biggr).
\end{aligned} \right.
\end{equation}
\end{definition}
This definition evaluates the coupled FBSDE through a reduced parametrization:
for each trial pair $(\tilde y,\tilde z)$,
the associated forward process $\widetilde X$ is first generated by the forward SDE,
and the backward residual $R$ is then evaluated using $(\widetilde X, \tilde y, \tilde z)$.
The process \(\widetilde{X}\) is same as the process defined by \eqref{eq:def-widetilde-X},
which captures the forward equation of \eqref{eq:FBSDE}.
The process \(R\) captures the backward equation of \eqref{eq:FBSDE},
quantifying the residual error after replacing $(Y,Z)$ with $(\tilde y, \tilde z)$ in \eqref{eq:def-widetilde-Y-Z}.
Given $(\tilde{y},\tilde{z})$,
the BML value could be estimated using Monte Carlo methods after time discretization;
see Section~\ref{section:optimization-based-solver} for more details on calculating BML values.

Intuitively, the BML value quantifies how well a trial solution fits
FBSDE~\eqref{eq:FBSDE}. Indeed, if the residual error process \(R\) is
almost everywhere zero, then the trial solution is expected to solve
the FBSDE. The following theorem, however, provides another
insightful way to interpret it through the Picard operator \(\Phi\)
defined for this FBSDE.

\begin{theorem}[The Picard Interpretation of BML Values]
\label{theorem:the-Picard-interpretations-of-BML-values}
For any trial solution \((\tilde{y}, \tilde{z})\in\mathcal{M}[0,T]\),
its BML value for FBSDE~\eqref{eq:FBSDE} equals the residual loss of
the fixed point equation of \(\Phi\), i.e.,
\begin{equation}
\label{eq:the-Picard-interpretations-of-BML-values}
\operatorname{BML}(\tilde{y},\tilde{z}) = \|(\tilde{y},\tilde{z}) -
\Phi(\tilde{y},\tilde{z})\|^2_\mu.
\end{equation}
Here, \(\|\cdot\|_\mu\) is defined by
\begin{equation}
\label{eq:mu-norm}
\|(Y, Z)\|_\mu := \left\{ \mathbb{E} \int_0^T \biggl( |Y_t|^2 + \int_t^T |Z_s|^2 \,ds
\biggr)\, \mu(dt)\right\}^{1/2}.
\end{equation}
\end{theorem}

\begin{proof}
Let \(\widetilde{X}\) be the process in
\eqref{eq:def-residual-error-process}. Let \((\widetilde{Y},
\widetilde{Z}):=\Phi(\tilde{y},\tilde{z})\). Then, the triple
\((\widetilde{X},\widetilde{Y},\widetilde{Z})\) solves the following
linear decoupled FBSDE $$ \left\{ \begin{aligned} \widetilde{X}_t &=
  x_0 + \int_0^t b(s,\widetilde{X}_s,\tilde{y}_s,\tilde{z}_s)\,ds +
  \int_0^t \sigma(s, \widetilde{X}_s,\tilde{y}_s,\tilde{z}_s)\,dW_s,
  \\ \widetilde{Y}_t &= g(\widetilde{X}_T) + \int_t^T
  f(s,\widetilde{X}_s,\tilde{y}_s,\tilde{z}_s)\,ds - \int_t^T
  \widetilde{Z}_s dW_s.  \end{aligned}\right. $$ Therefore, the
residual error process becomes $$ R(t,\omega;\tilde{y},\tilde{z}) =
\tilde{y}_t - \biggl(\widetilde{Y}_t + \int_t^T\widetilde{Z}_s\,dW_s -
\int_t^T \tilde{z}_s \,dW_s \biggr). $$ Taking expectation yields
$$ \begin{aligned}
&\mathbb{E}\left[|R(t,\omega;\tilde{y},\tilde{z})|^2\right] \\
&=
\mathbb{E}\left[|\tilde{y}_t - \widetilde{Y}_t|^2\right] + \mathbb{E}\left[\left(\int_t^T
(\tilde{z}_s - \widetilde{Z}_s)\,dW_s \right)^2\right] \\
&\quad+ \mathbb{E}\left[
(\tilde{y}_t - \widetilde{Y}_t) \left(\int_t^T (\tilde{z}_s -
\widetilde{Z}_s)\,dW_s \right) \right] \\
&= \mathbb{E}\left[|\tilde{y}_t -
\widetilde{Y}_t|^2\right] + \mathbb{E}\int_t^T |\tilde{z}_s -
\widetilde{Z}_s|^2\,ds.  \end{aligned} $$ Taking integrals on both
sides over \([0,T]\) completes the proof.
\end{proof}

\begin{remark}
\label{remark:choices-of-mu-measure}
This proof remains valid for any finite measure \(\mu\). Indeed, it is
possible to extend our results to other measures that are equivalent
to the Lebesgue measure, providing flexibility in practical
implementations. Nevertheless, we fix \(\mu\) to the standard Lebesgue
measure to avoid those technical details in this work.
\end{remark}

This theorem underpins the main idea of the paper:
the fixed-point equation can be addressed by directly minimizing the BML value.
Instead of explicitly evaluating the Picard operator,
which is computationally expensive because it involves solving a linear FBSDE,
one may directly minimize the BML value with respect to the decision variables \((\tilde{y},\tilde{z})\).

The \(\|\cdot\|_\mu\), referred to as the \(\mu\)-norm, is indeed a norm on the
solution space, and is weaker than all the three norms introduced
before.

\begin{lemma}[\(\mu\)-norm]
\label{lemma:mu-norm}
The \(\|\cdot\|_\mu\) defined in \eqref{eq:mu-norm} can be
equivalently written as
\begin{equation}
\label{eq:mu-norm-alternative-def}
\|(Y, Z)\|_\mu := \left\{ \mathbb{E} \int_0^T
\biggl( |Y_t|^2 + t|Z_t|^2 \biggr)\, dt\right\}^{1/2}.
\end{equation}
It is a norm on \(\mathcal{M}[0,T]\).
Moreover,
it is weaker than the standard
norm~\eqref{eq:standard-norm},
the \(\beta\)-norm \eqref{eq:beta-norm},
and the sup-norm~\eqref{eq:sup-norm}.
\end{lemma}

\begin{proof}
See Appendix~\ref{appendix:norms-on-the-solution-space}.
\end{proof}

\begin{remark}
The equivalence between \eqref{eq:mu-norm} and \eqref{eq:mu-norm-alternative-def} relies on the
fact that \(\mu([0,t])=t\), as we fix \(\mu\) to the Lebesgue measure. In
general, for any finite measure \(\mu\) on \([0,T]\), the definition \eqref{eq:mu-norm} is
equivalent to $$ \|(Y, Z)\|_\mu := \left\{ \mathbb{E} \int_0^T |Y_t|^2 \,\mu(dt) + \mathbb{E}
\int_0^T \mu([0,s])|Z_s|^2 \, ds\right\}^{1/2}.  $$
\end{remark}

Combining Lemma~\ref{lemma:mu-norm} and
Theorem~\ref{theorem:the-Picard-interpretations-of-BML-values}
concludes that a trial solution has zero BML value if and only if it
is a fixed point of \(\Phi\).

\begin{corollary}[Zero BML Value Solution]
\label{corollary:zero-BML-value-solution}
Let \((\tilde{y},\tilde{z})\) be a point in the solution space
\(\mathcal{M}[0,T]\). Then, its BML value for FBSDE~\eqref{eq:FBSDE}
equals zero if and only if it is a part of an adapted solution of that
FBSDE.
\end{corollary}


Corollary~\ref{corollary:zero-BML-value-solution} shows that solving
FBSDE~\eqref{eq:FBSDE} is equivalent to driving the BML value to zero.
For numerical optimization, however, one typically obtains trial
solutions with small but nonzero BML values. This raises two natural
questions: whether vanishing BML values force convergence to the true
solution, and how the BML value quantitatively controls the solution
error. We address these questions next, first in the general coupled
setting and then, in the subsequent subsection, in the decoupled case
where sharper estimates are available.

\subsection{Convergence Analysis and Error Bounds with BML Values}
\label{sec:error-bounds}
In this subsection,
we study to what extent the BML value controls the
distance between a trial solution and the true solution.
We begin with the general coupled FBSDE case, which is the primary
setting of this paper. When the forward and backward equations are
decoupled, the associated Picard operator enjoys better contraction properties,
and strong conclusions are available under the standard contraction assumptions.
Sharper estimates in the special case will be derived later.

First,
we establish a useful lemma to reveal relationship between the contraction property of $\Phi$ and the
\(\mu\)-norm of BML.

\begin{lemma}[Contraction Under a Norm Stronger Than \(\mu\)-norm]
\label{lemma:contraction-under-a-norm-stronger-than-mu-norm}
Let Assumption~\ref{assumption:standing-assumption-for-FBSDEs} hold and additional
assumptions in Lemma~\ref{lemma:FBSDE-Existence} hold. Then, there exists a
norm on \(\mathcal{M}[0,T]\), denoted by \(\|\cdot\|_{\bar{\mu}}\),
which satisfies: 1) it is stronger than the \(\mu\)-norm but weaker
than the standard norm; 2) the Picard operator \(\Phi\) is a strict
contraction under \(\|\cdot\|_{\bar{\mu}}\).
\end{lemma}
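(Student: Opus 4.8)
The plan is to take for $\|\cdot\|_{\bar\mu}$ the sup-norm $\|\cdot\|_{\mathrm{sup}}$ of Eq.~\eqref{eq:sup-norm}, i.e.\ to set $\|\cdot\|_{\bar\mu} := \|\cdot\|_{\mathrm{sup}}$. Since the hypotheses of the lemma are precisely those of Theorem~\ref{theorem:FBSDE-Existence} (in particular $L_0L_g < 1$ and $T \in (0,T_0]$), property~2) --- that $\Phi$ is a strict contraction under $\|\cdot\|_{\bar\mu}$ --- is then nothing but the contraction statement of Theorem~\ref{theorem:FBSDE-Existence} itself, quoted verbatim. So the entire content of the lemma reduces to checking property~1): that $\|\cdot\|_{\mathrm{sup}}$ sits between the $\mu$-norm and the standard norm in the domination order used throughout the paper.

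For property~1) I would collect facts that are already recorded. Remark~\ref{remark:sup-norm-in-FBSDE-Existence} (with the details relegated to Appendix~\ref{appendix:norms-on-the-solution-space}) states that the sup-norm is weaker than the standard norm~\eqref{eq:standard-norm}, which is one of the two required comparisons; and Lemma~\ref{lemma:mu-norm} states that the $\mu$-norm is weaker than the sup-norm, which is the other. If a self-contained argument is preferred, both comparisons follow from a short chain: using $\mathbb{E}|Y_t|^2 \le \|(Y,Z)\|_{\mathrm{sup}}^2$ for every $t$ and the fact that $t\mapsto \mathbb{E}\int_t^T|Z_s|^2\,ds$ is nonincreasing (hence bounded by its value at $t=0$, which is at most $\|(Y,Z)\|_{\mathrm{sup}}^2$), the alternative form~\eqref{eq:mu-norm-alternative-def} of the $\mu$-norm gives $\|(Y,Z)\|_\mu^2 \le 2T\,\|(Y,Z)\|_{\mathrm{sup}}^2 \le 2T\,\|(Y,Z)\|^2$, while $\|(Y,Z)\|_{\mathrm{sup}}^2 \le \|(Y,Z)\|^2$ is immediate by pulling the two suprema apart. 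That these dominations are strict is part of what Appendix~\ref{appendix:norms-on-the-solution-space} records, and I would simply cite it.

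The main point is that there is essentially no obstacle here: once the sup-norm is identified as the right object, the lemma is a direct corollary of Theorem~\ref{theorem:FBSDE-Existence}, which is exactly why the conclusion is weaker than its decoupled counterpart Lemma~\ref{lemma:contraction-under-a-norm-equivalent-to-mu-norm} --- there one can rework the proof of Theorem~\ref{theorem:BSDE-Existence} to manufacture a norm genuinely \emph{equivalent} to the $\mu$-norm under which $\Phi$ contracts, whereas borrowing Theorem~\ref{theorem:FBSDE-Existence} unchanged only yields a norm \emph{comparable from one side} with the $\mu$-norm. The only care needed is to import the hypotheses of Theorem~\ref{theorem:FBSDE-Existence} verbatim and to use ``stronger''/``weaker'' consistently with their meaning elsewhere in the paper (domination up to a multiplicative constant). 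Building the full equivalent-norm analogue for coupled FBSDEs would instead require redoing the fixed-point estimate with the forward SDE present, and is left for future work.
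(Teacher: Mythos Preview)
Your proposal is correct and matches the paper's approach exactly: the paper's proof is a single sentence stating that by Theorem~\ref{theorem:FBSDE-Existence} the norm can be chosen as the sup-norm~\eqref{eq:sup-norm}, with property~1) already covered by Remark~\ref{remark:sup-norm-in-FBSDE-Existence} and Lemma~\ref{lemma:mu-norm}. Your additional self-contained verification of the norm comparisons is more than the paper provides, but entirely consistent with it.
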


\begin{proof}
By the standard existence result of FBSDEs (Lemma~\ref{lemma:FBSDE-Existence} in Appendix~\ref{appendix:existence-results}),
the Picard operator \(\Phi\) is a strict contraction under the sup-norm~\eqref{eq:sup-norm}.
In view of Lemma~\ref{lemma:mu-norm},
the desired norm \(\|\cdot\|_{\bar{\mu}}\) can be chosen as the sup-norm.
\end{proof}

\begin{theorem}[Convergence of BML Values Implies Convergence to the True Solution]
\label{theorem:convergence-of-BML-values-implies-convergence-to-the-true-solution-coupled-case}
Let Assumption~\ref{assumption:standing-assumption-for-FBSDEs} hold
and additional assumptions in Lemma~\ref{lemma:FBSDE-Existence}
hold. Consider a sequence \(\{(\tilde{y}^{(k)},
\tilde{z}^{(k)})\}_{k=1}^\infty\) in the solution space
\(\mathcal{M}[0,T]\)
whose associated sequence of BML values for
FBSDE~\eqref{eq:FBSDE} converges to zero,
i.e., $$ \lim_{k\to\infty} \operatorname{BML}(\tilde{y}^{(k)},
\tilde{z}^{(k)}) = 0. $$
Then, any convergent
subsequence of \(\{(\tilde{y}^{(k)}, \tilde{z}^{(k)})\}_{k=1}^\infty\)
converges to the true solution \((Y^*,Z^*)\), where the convergence of
trial solutions is understood in the sense of the standard norm on
\(\mathcal{M}[0,T]\).
\end{theorem}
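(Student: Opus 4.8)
The plan is to follow the proof of Theorem~\ref{theorem:convergence-of-BML-values-implies-convergence-to-the-true-solution} almost verbatim, with the $\mu$-norm continuity of $\Phi$ used there replaced by the sup-norm contraction property of $\Phi$ supplied by Lemma~\ref{lemma:contraction-under-a-norm-stronger-than-mu-norm}. Let $(\tilde y^*,\tilde z^*)$ be the limit, in the standard norm, of a convergent subsequence; after relabelling we may assume the whole sequence $\{(\tilde y^{(k)},\tilde z^{(k)})\}_{k=1}^\infty$ converges to $(\tilde y^*,\tilde z^*)$ in the standard norm, and $(\tilde y^*,\tilde z^*)\in\mathcal{M}[0,T]$ because $\mathcal{M}[0,T]$ is a Banach space under that norm. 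By Theorem~\ref{theorem:FBSDE-Existence}, under the standing and additional assumptions the FBSDE has a unique adapted solution, which is the unique fixed point of $\Phi$; hence it suffices to show that $(\tilde y^*,\tilde z^*)$ is a fixed point of $\Phi$, and by Lemma~\ref{lemma:mu-norm} this is equivalent to $\|(\tilde y^*,\tilde z^*)-\Phi(\tilde y^*,\tilde z^*)\|_\mu = 0$.

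To get the last equality I would insert the $k$-th iterate twice in the $\mu$-norm triangle inequality,
\[
\|(\tilde y^*,\tilde z^*)-\Phi(\tilde y^*,\tilde z^*)\|_\mu \le A_k + B_k + C_k ,
\]
where $A_k := \|(\tilde y^*,\tilde z^*)-(\tilde y^{(k)},\tilde z^{(k)})\|_\mu$, $B_k := \|(\tilde y^{(k)},\tilde z^{(k)})-\Phi(\tilde y^{(k)},\tilde z^{(k)})\|_\mu$, and $C_k := \|\Phi(\tilde y^{(k)},\tilde z^{(k)})-\Phi(\tilde y^*,\tilde z^*)\|_\mu$, and then send $k\to\infty$. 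Here $A_k\to 0$ since convergence in the standard norm implies convergence in the $\mu$-norm (Lemma~\ref{lemma:mu-norm}); $B_k = \sqrt{\operatorname{BML}(\tilde y^{(k)},\tilde z^{(k)})}\to 0$ by Theorem~\ref{theorem:the-Picard-interpretations-of-BML-values} and the hypothesis; and for $C_k$ I would dominate the $\mu$-norm by the sup-norm (Lemma~\ref{lemma:mu-norm}), apply the Lipschitz bound $\|\Phi(Y,Z)-\Phi(\hat Y,\hat Z)\|_{\textrm{sup}}\le L\|(Y,Z)-(\hat Y,\hat Z)\|_{\textrm{sup}}$ of Lemma~\ref{lemma:contraction-under-a-norm-stronger-than-mu-norm}, and finally dominate the sup-norm by the standard norm (Remark~\ref{remark:sup-norm-in-FBSDE-Existence}), obtaining $C_k \le c\,\|(\tilde y^{(k)},\tilde z^{(k)})-(\tilde y^*,\tilde z^*)\|$ for some constant $c>0$, which tends to $0$. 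Thus $\|(\tilde y^*,\tilde z^*)-\Phi(\tilde y^*,\tilde z^*)\|_\mu = 0$; Lemma~\ref{lemma:mu-norm} upgrades this to equality in the standard norm, so $(\tilde y^*,\tilde z^*)$ is the fixed point of $\Phi$, i.e.\ $(\tilde y^*,\tilde z^*) = (Y^*,Z^*)$, and since this holds for every convergent subsequence the theorem follows.

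The one point needing care — and the only real difference from the decoupled case — is the norm mismatch: the BML value controls the fixed-point residual only in the weak $\mu$-norm, whereas the contraction property of $\Phi$ for coupled FBSDEs is available only in the strictly stronger sup-norm, and the two norms are not equivalent. The argument above sidesteps this by carrying out the whole estimate in the $\mu$-norm while exploiting that the iterates are assumed to converge in the \emph{strongest} (standard) norm, so that the ordering $\|\cdot\|_\mu \lesssim \|\cdot\|_{\textrm{sup}} \lesssim \|\cdot\|$ lets the term $C_k$ be pushed through the sup-norm Lipschitz bound; in the decoupled case one instead had $\mu$-norm continuity of $\Phi$ directly through the equivalent norm $\|\cdot\|_{\mu(\beta)}$, and convergence in the $\mu$-norm alone sufficed. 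One caveat, inherited from the decoupled case, is that the statement presupposes the existence of a convergent subsequence; this is not established here and would presumably require a coupled-case analogue of Theorem~\ref{theorem:error-bounds-with-BML-values-for-BSDEs}.
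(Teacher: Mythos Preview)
Your proof is correct and follows essentially the same route as the paper: the triangle inequality in the $\mu$-norm, with $B_k$ handled by Theorem~\ref{theorem:the-Picard-interpretations-of-BML-values} and $C_k$ pushed through the sup-norm Lipschitz bound via the chain $\|\cdot\|_\mu \lesssim \|\cdot\|_{\textrm{sup}} \lesssim \|\cdot\|$. The only difference is that the paper in fact proves the slightly stronger statement that convergence of the subsequence in the sup-norm $\|\cdot\|_{\bar\mu}$ (rather than the standard norm) already suffices, by routing both $A_k$ and $C_k$ through $\|\cdot\|_{\bar\mu}$; your version uses the standard-norm hypothesis directly, which is all the theorem as stated requires.
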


\begin{proof}
Let \((\tilde{y}^*,
\tilde{z}^*)\in\mathcal{M}[0,T]\) be an accumulation point of \(\{(\tilde{y}^{(k)},
\tilde{z}^{(k)})\}_{k=1}^\infty\) under the standard norm. Without
loss of generality, assume that the whole sequence converges to
\((\tilde{y}^*, \tilde{z}^*)\). Otherwise, we replace the original
sequence with its convergent subsequence and apply the same proof.

Let \(\|\cdot\|_{\bar{\mu}}\) be the norm described in
Lemma~\ref{lemma:contraction-under-a-norm-stronger-than-mu-norm}.
Then,
the trial solution sequence \(\{(\tilde{y}^{(k)},
\tilde{z}^{(k)})\}_{k=1}^\infty\) is also convergent under \(\|\cdot\|_{\bar{\mu}}\)
and \(\mu\)-norm.

It is sufficient to show that the limit \((\tilde{y}^*, \tilde{z}^*)\)
achieves zero BML value. For any \(k \geq 1\),
$$ \begin{aligned}
&\|(\tilde{y}^*, \tilde{z}^*) - \Phi(\tilde{y}^*, \tilde{z}^*)\|_\mu\\
&\leq\|(\tilde{y}^*, \tilde{z}^*)-(\tilde{y}^{(k)},
\tilde{z}^{(k)})\|_\mu+\|(\tilde{y}^{(k)},
\tilde{z}^{(k)})-\Phi(\tilde{y}^{(k)},
\tilde{z}^{(k)})\|_\mu\\
&\quad+\|\Phi(\tilde{y}^{(k)},
\tilde{z}^{(k)})-\Phi(\tilde{y}^*, \tilde{z}^*)\|_\mu \\
&\leq  C\|(\tilde{y}^*, \tilde{z}^*)-(\tilde{y}^{(k)},
\tilde{z}^{(k)})\|_{\bar{\mu}}+\|(\tilde{y}^{(k)},
\tilde{z}^{(k)})-\Phi(\tilde{y}^{(k)},
\tilde{z}^{(k)})\|_\mu\\
&\quad+C\|\Phi(\tilde{y}^{(k)},
\tilde{z}^{(k)})-\Phi(\tilde{y}^*, \tilde{z}^*)\|_{\bar{\mu}},
\end{aligned} $$ where the second inequality comes from the fact that
\(\|\cdot\|_\mu \leq C \|\cdot\|_{\bar{\mu}}\).
On the right hand side, the
first term vanishes for large enough \(k\) as \(\{(\tilde{y}^{(k)},
\tilde{z}^{(k)})\}_{k=1}^\infty\) converges to \((\tilde{y}^*,
\tilde{z}^*)\) under the \(\mu\)-norm. The second term vanishes for
large enough \(k\) as the associated BML values converge to zero (applying
Theorem~\ref{theorem:the-Picard-interpretations-of-BML-values}). The
third term also vanishes for large enough \(k\) as \(\Phi\) is
continuous under the \(\mu\)-norm. Therefore,
$$ \|(\tilde{y}^*, \tilde{z}^*) - \Phi(\tilde{y}^*, \tilde{z}^*)\|_\mu =
0.$$
Applying Corollary~\ref{corollary:zero-BML-value-solution} concludes the proof.
\end{proof}


Note that Lemma~\ref{lemma:contraction-under-a-norm-stronger-than-mu-norm}
only asserts that \(\Phi\) is a contraction under a norm stronger than the
\(\mu\)-norm, not a norm equivalent to the \(\mu\)-norm. For this reason, we currently
can only obtain one direction of the error bound for coupled FBSDEs.

\begin{proposition}[A Lower Error Bound for Coupled FBSDEs]
\label{proposition:a-lower-error-bound-for-coupled-FBSDEs}
Let Assumption~\ref{assumption:standing-assumption-for-FBSDEs} hold and additional
assumptions in Lemma~\ref{lemma:FBSDE-Existence} hold. Let \((\tilde{y},\tilde{z}) \in
\mathcal{M}[0,T]\).  Then, the error between it and the
true solution \((Y^*,Z^*)\) under the standard norm can be bounded below
by its BML values: there exists a positive constant \(C\), independent
of \((\tilde{y},\tilde{z})\), such that $$ C
\operatorname{BML}(\tilde{y},\tilde{z}) \leq
\|(\tilde{y},\tilde{z})-(Y^*,Z^*)\|^2. $$
\end{proposition}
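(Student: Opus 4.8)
The plan is to replay the lower-bound half of the argument for Theorem~\ref{theorem:error-bounds-with-BML-values-for-BSDEs}, with the sup-norm~\eqref{eq:sup-norm} playing the role that $\|\cdot\|_{\mu(\beta)}$ played there. Concretely, Lemma~\ref{lemma:contraction-under-a-norm-stronger-than-mu-norm} supplies a constant $L\in(0,1)$ with
$$\|\Phi(Y,Z)-\Phi(\hat Y,\hat Z)\|_{\textrm{sup}}\le L\,\|(Y,Z)-(\hat Y,\hat Z)\|_{\textrm{sup}},\qquad\forall\,(Y,Z),(\hat Y,\hat Z)\in\mathcal{M}[0,T],$$
and Theorem~\ref{theorem:FBSDE-Existence} guarantees that $(Y^*,Z^*)$ is the unique fixed point, i.e.\ $\Phi(Y^*,Z^*)=(Y^*,Z^*)$. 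Moreover, by Theorem~\ref{theorem:the-Picard-interpretations-of-BML-values}, $\operatorname{BML}(\tilde{y},\tilde{z})=\|(\tilde{y},\tilde{z})-\Phi(\tilde{y},\tilde{z})\|_\mu^2$, so it suffices to bound $\|(\tilde{y},\tilde{z})-\Phi(\tilde{y},\tilde{z})\|_\mu$ from above by a constant multiple of $\|(\tilde{y},\tilde{z})-(Y^*,Z^*)\|$.

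For this bound, I would first pass from the $\mu$-norm to the sup-norm using Lemma~\ref{lemma:mu-norm} (which gives $\|\cdot\|_\mu\le c\,\|\cdot\|_{\textrm{sup}}$ for some $c>0$), then apply the triangle inequality together with the contraction and fixed-point properties,
$$\|(\tilde{y},\tilde{z})-\Phi(\tilde{y},\tilde{z})\|_{\textrm{sup}}\le\|(\tilde{y},\tilde{z})-(Y^*,Z^*)\|_{\textrm{sup}}+\|\Phi(Y^*,Z^*)-\Phi(\tilde{y},\tilde{z})\|_{\textrm{sup}}\le(1+L)\,\|(\tilde{y},\tilde{z})-(Y^*,Z^*)\|_{\textrm{sup}},$$
and finally pass from the sup-norm to the standard norm using the comparison recorded in Remark~\ref{remark:sup-norm-in-FBSDE-Existence} (i.e.\ $\|\cdot\|_{\textrm{sup}}\le c'\,\|\cdot\|$ for some $c'>0$). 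Chaining these three inequalities gives $\operatorname{BML}(\tilde{y},\tilde{z})\le c^2(1+L)^2(c')^2\,\|(\tilde{y},\tilde{z})-(Y^*,Z^*)\|^2$, so the claim holds with $C=\bigl(c\,(1+L)\,c'\bigr)^{-2}$, which is independent of $(\tilde{y},\tilde{z})$.

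There is no deep obstacle here; the only point requiring care is the direction of the norm comparisons, since we must route $\mu$-norm $\le$ sup-norm $\le$ standard norm --- precisely the ordering provided by Lemma~\ref{lemma:mu-norm} and Remark~\ref{remark:sup-norm-in-FBSDE-Existence}. This asymmetry is also the reason only the lower bound is available: reversing the chain would require the sup-norm to be \emph{equivalent} to (not merely stronger than) the $\mu$-norm, which Lemma~\ref{lemma:contraction-under-a-norm-stronger-than-mu-norm} does not supply, so an upper bound of the form $\|(\tilde{y},\tilde{z})-(Y^*,Z^*)\|^2\le C'\operatorname{BML}(\tilde{y},\tilde{z})$ cannot be obtained by this route.
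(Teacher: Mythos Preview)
Your proposal is correct and follows essentially the same route as the paper: both pass from the $\mu$-norm to the sup-norm (the paper's $\|\cdot\|_{\bar\mu}$), apply the triangle inequality with the Lipschitz/contraction property of $\Phi$ and the fixed-point identity $\Phi(Y^*,Z^*)=(Y^*,Z^*)$ to obtain the $(1+L)$ factor, and then pass to the standard norm. The only cosmetic difference is that the paper states the Lipschitz constant as $L>0$ (since strict contraction is not actually needed for this direction), whereas you invoke $L\in(0,1)$ from Lemma~\ref{lemma:contraction-under-a-norm-stronger-than-mu-norm}; either works under the stated assumptions.
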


\begin{proof}
Let \(\|\cdot\|_{\bar{\mu}}\) be the norm described in Lemma~\ref{lemma:contraction-under-a-norm-stronger-than-mu-norm}. Then, for all \((Y, Z) \in \mathcal{M}[0,T]\), there exist
two positive constant \(c_1\) and \(c_2\) such that
\begin{equation}
\label{eq:tmp-466}
c_1 \|(Y, Z)\|_\mu \leq \|(Y, Z)\|_{\bar{\mu}} \leq c_2 \|(Y, Z)\|.
\end{equation}
Moreover, for all \((Y, Z),(\hat{Y},\hat{Z}) \in \mathcal{M}[0,T]\), there exists a constant \(L > 0\) such that
$$ \|\Phi(Y,Z) -
\Phi(\hat{Y},\hat{Z})\|_{\bar{\mu}} \leq L\|(Y,Z) - (\hat{Y},\hat{Z})\|_{\bar{\mu}}.$$
For
\((\tilde{y},\tilde{z}) \in \mathcal{M}[0,T]\), there is
\begin{equation}
\label{eq:tmp-474}
\begin{aligned}
&\|(\tilde{y},\tilde{z})-\Phi(\tilde{y},\tilde{z})\|_{\bar{\mu}}\\ &\leq\|(\tilde{y},\tilde{z})-(Y^*,Z^*)\|_{\bar{\mu}}+\|\Phi(Y^*,Z^*)-\Phi(\tilde{y},\tilde{z})\|_{\bar{\mu}}\\
&\quad+\|(Y^*,Z^*)-\Phi(Y^*,Z^*)\|_{\bar{\mu}}\\
&\leq(1+L)\|(\tilde{y},\tilde{z})-(Y^*,Z^*)\|_{\bar{\mu}}.
\end{aligned}
\end{equation}
The proof is concluded by combining \eqref{eq:tmp-466} and
\eqref{eq:tmp-474} and applying
Theorem~\ref{theorem:the-Picard-interpretations-of-BML-values}.
\end{proof}

Note that this proof only uses the Lipschitz continuity of
\(\Phi\). Therefore, the smallness of \(T\) might be dropped at the cost of
\(\Phi\) no longer being a strict contraction. However, that would require
other assumptions to ensure the existence and uniqueness of the
solution \((Y^*,Z^*)\).

The above results provide convergence and a one-sided error estimate in
the general coupled setting. We next turn to the decoupled case,
equivalently the BSDE setting after solving the forward equation
independently, where the Picard operator admits stronger norm
properties and hence sharper conclusions, including two-sided error
bounds.

\subsection{Two-Sided Error Bounds in the Decoupled Case}
\label{sec:two-sided-error-bounds-in-the-decoupled-case}
We now consider the decoupled case, where the forward equation can
be solved independently of the backward equation. After substituting
the resulting forward process into the backward equation, the problem
reduces to a BSDE. 
For ease of notation,
we therefore drop the forward equation of \eqref{eq:FBSDE} and focus here on the BSDE
\begin{equation}
\label{eq:BSDE}
Y_t = \xi + \int_t^T f(s,Y_s,Z_s)\,ds - \int_t^T Z_s\, dW_s, \quad t\in[0,T],
\end{equation}
where the \emph{backward process} \(Y\) is valued in \(\mathbb{R}^m\),
the \emph{control process} \(Z\) is valued in \(\mathbb{R}^{m \times d}\),
$\xi$ is a random variable,
and $f$ is a function defined on $\Omega\times[0,T]\times \mathbb{R}^m \times \mathbb{R}^{m \times d}$.

The standing assumption of \eqref{eq:BSDE} is given below.
\begin{assumption}[Standing Assumption for BSDEs]
\label{assumption:standing-assumption-for-BSDEs}
Assume \(\xi \in L^2_{\mathcal{F}_T}(\Omega;\mathbb{R}^m)\) and \(f
\in L^2_{\mathcal{F}}(0,T;W^{1,\infty}(\mathbb{R}^m \times
\mathbb{R}^{m \times d};\mathbb{R}^m))\).
\end{assumption}
The Picard operator $\Phi$ can be similarly defined,
and its well-posedness is guaranteed under Assumption~\ref{assumption:standing-assumption-for-BSDEs}.
Lemma~\ref{lemma:BSDE-Existence} in Appendix~\ref{appendix:existence-results}
guarantees the existence and uniqueness of the true solution
\((Y^*,Z^*)\), and shows that the Picard operator \(\Phi\) is a strict
contraction under a suitable \(\beta\)-norm~\eqref{eq:beta-norm}.
Although the \(\mu\)-norm~\eqref{eq:mu-norm} is not equivalent to the
\(\beta\)-norm, one can construct a norm equivalent to the
\(\mu\)-norm under which \(\Phi\) remains a strict contraction.

\begin{lemma}[Contraction Under a Norm Equivalent to \(\mu\)-norm]
\label{lemma:contraction-under-a-norm-equivalent-to-mu-norm}
Let Assumption~\ref{assumption:standing-assumption-for-BSDEs}
hold. Then, there exists a norm on \(\mathcal{M}[0,T]\), denoted by
\(\|\cdot\|_{\mu(\beta)}\), which satisfies: 1) it is equivalent to
the \(\mu\)-norm; 2) the Picard operator \(\Phi\) is a strict
contraction under \(\|\cdot\|_{\mu(\beta)}\).
\end{lemma}

\begin{proof}
The desired norm can be chosen as
\[
\|(Y,Z)\|_{\mu(\beta)}^2
:=
\mathbb E\int_0^T e^{2\beta t}\bigl(|Y_t|^2 + t|Z_t|^2\bigr)\,dt,
\]
for some constant $\beta \in \mathbb{R}$;
see Appendix~\ref{appendix:contraction-property-of-Picard-operator-for-BSDEs}.
\end{proof}

In particular, this lemma implies that \(\Phi\) is continuous
(in fact, Lipschitz continuous) under the \(\mu\)-norm.
Consequently, one can relate the error of a trial solution directly to its
BML value by means of two-sided bounds.

\begin{theorem}[Two-Sided Error Bounds for BSDEs]
\label{theorem:error-bounds-with-BML-values-for-BSDEs}
Let Assumption~\ref{assumption:standing-assumption-for-BSDEs}
hold. Consider a trial solution \((\tilde{y},\tilde{z}) \in
\mathcal{M}[0,T]\). Then, the error between it and the true solution
\((Y^*,Z^*)\) under \(\mu\)-norm can be bounded by its BML value:
there exist positive constants \(C_1\) and \(C_2\), independent of
\((\tilde{y},\tilde{z})\), such that $$ C_1
\operatorname{BML}(\tilde{y},\tilde{z}) \leq
\|(\tilde{y},\tilde{z})-(Y^*,Z^*)\|_\mu^2 \leq C_2
\operatorname{BML}(\tilde{y},\tilde{z}).$$
\end{theorem}

\begin{proof}
Let \(\|\cdot\|_{\mu(\beta)}\) be the norm described in
Lemma~\ref{lemma:contraction-under-a-norm-equivalent-to-mu-norm}. Then, for all \((Y, Z) \in \mathcal{M}[0,T]\),
there exist positive constants \(c_1\) and \(c_2\) such that
\begin{equation}
\label{eq:tmp-323}
c_1 \|(Y, Z)\|_\mu \leq \|(Y, Z)\|_{\mu(\beta)} \leq c_2\|(Y, Z)\|_\mu.
\end{equation}
Moreover,
there exists a constant \(L \in (0, 1)\) such that for all \((Y, Z),(\hat{Y},\hat{Z}) \in \mathcal{M}[0,T]\),
$$ \|\Phi(Y,Z) -
\Phi(\hat{Y},\hat{Z})\|_{\mu(\beta)} \leq L\|(Y,Z) - (\hat{Y},\hat{Z})\|_{\mu(\beta)}.$$
For
\((\tilde{y},\tilde{z}) \in \mathcal{M}[0,T]\), there is
\begin{equation}
\label{eq:tmp-331}
\begin{aligned}
&\|(\tilde{y},\tilde{z})-\Phi(\tilde{y},\tilde{z})\|_{\mu(\beta)}\\ &\leq
\|(\tilde{y},\tilde{z})-(Y^*,Z^*)\|_{\mu(\beta)}+\|\Phi(Y^*,Z^*)-\Phi(\tilde{y},\tilde{z})\|_{\mu(\beta)}\\
&\quad+\|(Y^*,Z^*)-\Phi(Y^*,Z^*)\|_{\mu(\beta)}\\
&\leq(1+L)\|(\tilde{y},\tilde{z})-(Y^*,Z^*)\|_{\mu(\beta)}.
\end{aligned}
\end{equation}
In the other direction, there is
\begin{equation}
\label{eq:tmp-339}
\begin{aligned}
&\|(\tilde{y},\tilde{z})-(Y^*,Z^*)\|_{\mu(\beta)} \\
&\leq\|(\tilde{y},\tilde{z})-\Phi(\tilde{y},\tilde{z})\|_{\mu(\beta)}+\|\Phi(\tilde{y},\tilde{z})-\Phi(Y^*,Z^*)\|_{\mu(\beta)}\\
&\quad+\|(Y^*,Z^*)-\Phi(Y^*,Z^*)\|_{\mu(\beta)}\\
&\leq\|(\tilde{y},\tilde{z})-\Phi(\tilde{y},\tilde{z})\|_{\mu(\beta)}+L\|(\tilde{y},\tilde{z})-(Y^*,Z^*)\|_{\mu(\beta)}.
\end{aligned}
\end{equation}
Combining \eqref{eq:tmp-331} and \eqref{eq:tmp-339} yields
\begin{equation}
\label{eq:tmp-348}
\begin{split}
    \frac{1}{1+L}\|(\tilde{y},\tilde{z})-\Phi(\tilde{y},\tilde{z})\|_{\mu(\beta)}&\leq\|(\tilde{y},\tilde{z})-(Y^*,Z^*)\|_{\mu(\beta)}\\
    &\leq\frac{1}{1-L}\|(\tilde{y},\tilde{z})-\Phi(\tilde{y},\tilde{z})\|_{\mu(\beta)}.
\end{split}
\end{equation}
The proof is concluded by combining \eqref{eq:tmp-323} and
\eqref{eq:tmp-348} and applying Theorem~\ref{theorem:the-Picard-interpretations-of-BML-values}.
\end{proof}

Theorem~\ref{theorem:error-bounds-with-BML-values-for-BSDEs} has an important consequence.
Unlike the earlier
convergence statement for FBSDEs in the coupled case, the two-sided
error bounds here show that, in the BSDE setting, one does \emph{not}
need to assume the existence of a convergent subsequence. Indeed, if a
sequence of trial solutions has vanishing BML values, then the sequence
itself converges automatically;
moreover, its limit must be the true solution.

\begin{corollary}
\label{corollary:convergence-of-BML-values-implies-convergence-to-the-true-solution}
Let Assumption~\ref{assumption:standing-assumption-for-BSDEs}
hold. Consider a sequence \(\{(\tilde{y}^{(k)},
\tilde{z}^{(k)})\}_{k=1}^\infty\) in the solution space
\(\mathcal{M}[0,T]\) whose associated sequence of BML values for
BSDE~\eqref{eq:BSDE} converges to zero, i.e.,
$$
\lim_{k\to\infty} \operatorname{BML}(\tilde{y}^{(k)}, \tilde{z}^{(k)}) = 0.
$$
Then the sequence \(\{(\tilde{y}^{(k)}, \tilde{z}^{(k)})\}_{k=1}^\infty\)
itself converges to the true solution \((Y^*,Z^*)\) of that BSDE in the
\(\mu\)-norm.
\end{corollary}


\begin{remark}
This corollary strengthens the corresponding convergence statement proved earlier
for the coupled FBSDE case. There, vanishing BML values only imply that
\emph{any convergent subsequence} must converge to the true solution.
Here, by contrast, the two-sided error bounds imply convergence of the
entire sequence directly, with no a priori compactness or subsequence
assumption.

\end{remark}


Taken together, Section~\ref{sec:error-bounds} and
Section~\ref{sec:two-sided-error-bounds-in-the-decoupled-case} show that the BML framework applies directly to
general coupled FBSDEs, while the decoupled setting admits
sharper quantitative estimates.
\section{Optimization-Based Solver for FBSDEs}
\label{section:optimization-based-solver}
This section presents the practical optimization-based framework
corresponding to the BML theory developed in the previous section.
We first describe how the continuous-time objective is approximated by
time discretization and Monte Carlo sampling. Then,
we introduce the
parameterization scheme adopted in this paper, in which the backward
process \(Y\) and the control process \(Z\) are modeled
simultaneously. The relation to existing methods and analytical
examples will be discussed in the subsequent subsections.

\subsection{Time Discretization}
The theoretical results in Section~\ref{section:main-results} suggest
the following optimization problem for solving
FBSDE~\eqref{eq:FBSDE}:
\begin{equation}
\label{eq:def-optimization-problem}
\min_{(\tilde{y},\tilde{z})} \frac{1}{T}
\operatorname{BML}(\tilde{y},\tilde{z}),
\end{equation}
where \(\operatorname{BML}(\tilde{y},\tilde{z})\) is defined by
\eqref{eq:def-BML}--\eqref{eq:def-residual-error-process}. By
Corollary~\ref{corollary:zero-BML-value-solution}, a trial
solution solves the FBSDE if and only if its BML value equals zero.
Moreover, Section~\ref{section:main-results} shows that small BML
values are theoretically meaningful, as they imply closeness to the
true solution under the corresponding assumptions.
If solving optimization
problem~\eqref{eq:def-optimization-problem} yields a sequence of
trial solutions with vanishing objective values, then
Theorem~\ref{theorem:convergence-of-BML-values-implies-convergence-to-the-true-solution-coupled-case}
asserts that any convergent subsequence of it must converge to the true
solution. If numerically solving this optimization problem yields a
trial solution with a sufficiently small objective value, then
Proposition~\ref{proposition:a-lower-error-bound-for-coupled-FBSDEs}
and Theorem~\ref{theorem:error-bounds-with-BML-values-for-BSDEs}
provide error bounds for estimating the distance between the obtained
solution and the true solution.

We next explain how
the continuous-time objective
\eqref{eq:def-optimization-problem} is approximated numerically.
By definition, the BML value can
be estimated by Monte Carlo simulations for \(R(t,\omega)\), where
\(R\) is the backward residual error process defined in
\eqref{eq:def-residual-error-process}.  Let $M$ be the number of Monte
Carlo samples and $H$ be the number of time intervals for time
discretization. First, generate a collection of Brownian motion paths
on the time grid
$$ \mathbb{T}:=\{t_i; 0 \leq i \leq H\}, \quad \text{ where } t_i:=
i\,\Delta t:= \frac{iT}{H}. $$ Then, simulate the forward
SDE~\eqref{eq:def-widetilde-X} via the Euler-Maruyama method to obtain
collections of paths on the same time grid \(\mathbb{T}\) for the
triple \((\widetilde{X},\tilde{y},\tilde{z})\). Moreover, the
collection of paths for \(R\) on the same time grid \(\mathbb{T}\) can
be obtained by
\begin{equation}
\label{eq:discrete-residual}
\begin{aligned}
    R_{t_i} := \tilde{y}_{t_i} -
\biggl(&g(\widetilde{X}_{T}) + \sum_{k=i}^{H-1} f(t_k,
\widetilde{X}_{t_k}, \tilde{y}_{t_k}, \tilde{z}_{t_k})\,\Delta t\biggr.\\
&\biggl.-\sum_{k=i}^{H-1} \tilde{z}_{t_k}\,(W_{t_{k+1}} - W_{t_k}) \biggr),
\quad 0 \leq i \leq H.
\end{aligned}
\end{equation}
To estimate the integral in the BML value, we adopt a particle-style
Monte Carlo approximation.
For a particular sample path
\(R(\cdot,\omega^{(j)})\), randomly select a time instant
\(t_j\in\mathbb{T}\) and return \(|R(t_j,\omega^{(j)})|^2\) as one sample of the objective.
Finally, taking the empirical
expectation over all samples gives the estimation
\begin{equation}
\label{eq:approximate-optimization-problem}
\frac{1}{T}
\operatorname{BML}(\tilde{y},\tilde{z}) \approx \frac{1}{M}
\sum_{j=1}^M |R(t_j,\omega^{(j)})|^2.
\end{equation}
In principle,
a reasonably good Monte Carlo estimation with small
confidence-intervals may require large enough \(M\), e.g., \(10^6\) or
even \(10^9\). Nevertheless, when numerically minimizing via the
stochastic gradient descent (SGD) method and its variants, we could
use a significantly small \(M\), e.g., \(10^3\).

\subsection{Parameterization}
\begin{algorithm}[t]
\caption{Optimization-Based Solver with BML}
\label{alg:parallel-parameterization}
\begin{algorithmic}[1]
\STATE \textbf{Input:} initial parameter vector \(\theta\), learning rate \(\alpha\), batch size \(M\), number of time intervals \(H\), total number of iterations \(K\)
\STATE Construct the time grid
\(
\mathbb{T}=\{t_i=iT/H;\;0\le i\le H\}
\)
\FOR{\(k=1,2,\ldots,K\)}
    \STATE Sample \(M\) independent Brownian motion paths on \(\mathbb{T}\)
    \FOR{each sampled path \(\omega^{(j)}\), \(j=1,\ldots,M\)}
        \STATE Simulate \(\widetilde{X}\) on \(\mathbb{T}\) by the Euler--Maruyama method using \eqref{eq:parallel-parameterization-scheme}
        \STATE Evaluate
        \(
        \tilde{y}_{t_i}=y^\theta(t_i,\widetilde{X}_{t_i})
        \)
        and
        \(
        \tilde{z}_{t_i}=z^\theta(t_i,\widetilde{X}_{t_i})
        \)
        on the grid
        \STATE Compute the discrete residuals
        \(
        \{R_{t_i}\}_{i=0}^H
        \)
        by \eqref{eq:discrete-residual}
        \STATE Randomly select one time instant \(t_j\in\mathbb{T}\) and record the particle value
        \[
        \ell^{(j)}(\theta):=|R(t_j,\omega^{(j)})|^2
        \]
    \ENDFOR
    \STATE Compute the mini-batch empirical loss
    \[
    \widehat{\mathcal L}(\theta):=
    \frac{1}{M}\sum_{j=1}^M \ell^{(j)}(\theta)
    =
    \frac{1}{M}\sum_{j=1}^M |R(t_j,\omega^{(j)})|^2
    \]
    \STATE Update the parameter by stochastic gradient descent:
    \[
    \theta \leftarrow \theta-\alpha \nabla_\theta \widehat{\mathcal L}(\theta)
    \]
\ENDFOR
\STATE \textbf{Output:} trained parameterized functions \(y^\theta\) and \(z^\theta\)
\end{algorithmic}
\end{algorithm}

To solve optimization problem
\eqref{eq:def-optimization-problem} numerically, the trial pair
\((\tilde{y},\tilde{z})\) must be restricted to a parameterized family.
In this paper, we adopt a parallel parameterization scheme, where
the backward process \(Y\) and the control process \(Z\) are modeled
simultaneously.

Specifically, let \(y^\theta\) and \(z^\theta\) be two parameterized
functions. Given a parameter vector \(\theta\), the trial solution is
constructed by
\begin{equation}
\label{eq:parallel-parameterization-scheme}
\left\{
\begin{aligned}
\widetilde{X}_t
=&\;
x_0
+\int_0^t
b\bigl(s,\widetilde{X}_s,y^\theta(s,\widetilde{X}_s),
z^\theta(s,\widetilde{X}_s)\bigr)\,ds
\\
&\;
+\int_0^t
\sigma\bigl(s,\widetilde{X}_s,y^\theta(s,\widetilde{X}_s),
z^\theta(s,\widetilde{X}_s)\bigr)\,dW_s,
\\
\tilde{y}_t
=&\;
y^\theta(t,\widetilde{X}_t),
\qquad
\tilde{z}_t
=
z^\theta(t,\widetilde{X}_t).
\end{aligned}
\right.
\end{equation}
Thus,
$\widetilde X$ is not parameterized independently;
it is generated from the forward SDE once the trial pair $(\tilde y,\tilde z)$.
The same parameter vector \(\theta\) determines the
entire trial triple \((\widetilde{X},\tilde{y},\tilde{z})\). The BML
objective therefore becomes a finite-dimensional function of
\(\theta\), after time discretization and Monte Carlo approximation.

Instead of prescribing only one component of the backward equation,
this parameterization scheme models \(Y\) and \(Z\) in parallel and lets the forward
process be induced through the forward SDE. Such a design is
particularly natural for coupled FBSDEs, where the forward and
backward equations interact through both processes.
This is a core feature of our optimization-based framework.

In practical implementations, the functions \(y^\theta\) and
\(z^\theta\) can be chosen from general approximation classes, such as
neural networks. The particular architectures used in our numerical
experiments will be specified in
Section~\ref{section:experiments}. At the current level, it is enough
to regard \(y^\theta\) and \(z^\theta\) as generic parameterized
approximators of the two unknown processes.
The resulting computation pipeline is summarized in Algorithm~\ref{alg:parallel-parameterization}.

\subsection{Relation to Existing Methods}
The proposed solver is formulated for general coupled
FBSDEs. Under suitable choices of parameterization and
time discretization, its objective reduces to several representative
optimization-based methods in the literature. In this sense, the BML
framework provides a common objective-based viewpoint for these
methods, while remaining applicable to the more general coupled
setting.

The methods discussed in this subsection are all optimization-based approaches, and are therefore the most closely related to the present work. They share a common feature with our framework in that solving the FBSDE is reformulated as minimizing a loss or objective function over a parameterized class of trial solutions. In this sense, they have a similar probabilistic and computational nature, even though the specific parameterizations, objective designs, and theoretical analyses are different. By contrast, PDE-based approaches, as already mentioned in the introduction, are also valid for solving FBSDEs through the (nonlinear) Feynman--Kac correspondence. However, such methods are conceptually and numerically quite different from the current framework, since they proceed by first reformulating the problem as an associated PDE and then applying PDE solvers, often involving spatial discretization or other PDE-specific numerical techniques. For this reason, the discussion below focuses on optimization-based methods that are more directly comparable to our approach.

\textbf{Relation to the deep BSDE method
\cite{hanSolvingHighdimensionalPartial2018}.} For each time instant
\(t_i \in \mathbb{T}\), choose a parameterized function \(z^i(\cdot;\theta)\) for modeling
\(\tilde{z}\). Then, for any \(y_0 \in \mathbb{R}\), simulate the triple
\((\widetilde{X},\tilde{y},\tilde{z})\) on the time grid \(\mathbb{T}\) by
\begin{equation}
\label{eq:deep-BSDE-scheme}
\left\{ \begin{aligned} \widetilde{X}_0 &:= x_0, \quad \tilde{y}_0 := y_0,\quad \tilde{z}_0 := z^0( \widetilde{X}_0;\theta), \\ \widetilde{X}_{t_{i+1}}
&:= \widetilde{X}_{t_i} + b(t_i,\widetilde{X}_{t_i}, \tilde{y}_{t_i},
\tilde{z}_{t_i})\,\Delta t + \sigma(t_i,\widetilde{X}_{t_i}, \tilde{y}_{t_i},
\tilde{z}_{t_i})\,\Delta W_{t_{i}} ,\\ \tilde{y}_{t_{i+1}} &:= \tilde{y}_{t_i} -
f(t_i,\widetilde{X}_{t_i}, \tilde{y}_{t_i}, \tilde{z}_{t_i})\,\Delta t +
\tilde{z}_{t_i}\,\Delta W_{t_{i}} , \\ \tilde{z}_{t_{i+1}} &:=
z^{i+1}(\widetilde{X}_{t_{i+1}};\theta). \end{aligned} \right.
\end{equation}
The
optimization problem in the deep BSDE method is formulated as
\begin{equation}
\label{eq:deep-BSDE-optimization-problem}
\min_{(y_0,\theta)} \mathbb{E}| \tilde{y}_T - g( \widetilde{X}_T ) |^2.
\end{equation}
Note that this objective value is exactly the BML value of the
simulated \((\tilde{y},\tilde{z})\) multiplied by a constant factor $T$.
Indeed, the residual error
process \(R\) for the simulated triple
\((\widetilde{X},\tilde{y},\tilde{z})\) under scheme \eqref{eq:deep-BSDE-scheme} is
$$ \begin{aligned} R_{t_i} =& \biggl( \tilde{y}_T - \sum_{k=i}^{H-1}
[\tilde{y}_{t_{k+1}} - \tilde{y}_{t_k}] \biggr) -
\biggl(g(\widetilde{X}_{T}) \biggr.\\
&\biggl.+ \sum_{k=i}^{H-1} f(t_k,
\widetilde{X}_{t_k}, \tilde{y}_{t_k}, \tilde{z}_{t_k})\,\Delta t -
\sum_{k=i}^{H-1} \tilde{z}_{t_k}\,(W_{t_{k+1}} - W_{t_k}) \biggr) \\ =&\tilde{y}_T - g( \widetilde{X}_T ), \qquad \text{ for any } 0 \leq i \leq H.
\end{aligned} $$
Therefore,
the parameterization scheme \eqref{eq:deep-BSDE-scheme} leads to
$$\mathbb{E}\int_0^T |R_t|^2\,dt = T\cdot \mathbb{E}| \tilde{y}_T - g( \widetilde{X}_T ) |^2.$$
This shows that our framework \eqref{eq:def-optimization-problem}--\eqref{eq:approximate-optimization-problem} under scheme \eqref{eq:deep-BSDE-scheme} recovers the deep BSDE method
\eqref{eq:deep-BSDE-optimization-problem}.


We also note that, beyond the original formulation in \cite{hanSolvingHighdimensionalPartial2018}, the deep BSDE method has been studied theoretically in \cite{hanConvergenceDeepBSDE2020}.
Although \cite{hanSolvingHighdimensionalPartial2018} initially focuses on decoupled FBSDEs, \cite{hanConvergenceDeepBSDE2020} analyzes the algorithmic error under the fundamental assumption that the forward equation does not involve $Z$.
Their analysis is carried out at the level of a time-discretized scheme and mainly quantifies how the approximation error depends on the time step. This perspective is different from ours. In the present paper, the theory is developed in continuous time and focuses on the interpretation of the objective function itself, rather than on the discretization error induced by a particular time grid.

\begin{table*}[tb]
\centering
\caption{Comparison with relevant optimization-based methods.}
\label{tab:relation-existing-methods}
\renewcommand{\arraystretch}{1.08}
\setlength{\tabcolsep}{4pt}
\begin{tabular}{p{2.4cm} p{2.3cm} p{4.0cm} p{2.0cm} p{5.3cm}}
\toprule
Method & Objective & Problem & Variables & Note \\
\midrule
BML
& Integral residual
& General coupled FBSDEs
& \(\tilde y, \tilde z\)
& Error analysis in continuous time; require contraction property; two-sided error bounds for decoupled case \\
\midrule
Deep BSDE
& Terminal matching
& General coupled FBSDEs
& \(y_0,\tilde z\)
& Error analysis after discretization; require $Z$ absence in forward equation\\
\midrule
Martingale
& Integral value residual
& Linear decoupled FBSDEs
& \(\tilde y\)
& Error analysis in continuous time; exact \(Y\)-error identity in its linear setting \\
\midrule
Deep BSDE-ML
& Terminal matching with fixed \(Y_0\)
& Linear decoupled FBSDEs
& \(\tilde z\)
& Error analysis in continuous time; exact \(Z\)-error identity in its linear setting \\
\midrule
Robust deep FBSDE
& Terminal matching with fixed \(Y_0\) and variance penalty
& Control-induced coupled FBSDEs
& \(\tilde z\)
& Error analysis after discretization; exploit the stochastic control interpretation\\
\bottomrule
\end{tabular}
\end{table*}

\textbf{Relation to the martingale loss
\cite{jiaPolicyEvaluationTemporaldifference2022}.} That method is
developed for the policy evaluation problem in continuous-time
reinforcement learning, where the considered FBSDE is decoupled and
essentially reduces to the following BSDE
\begin{equation}
\label{eq:simple-BSDE}
Y_t = \xi + \int_t^T r_s\,ds -
\int_t^T Z_s\,dW_s, \quad t\in[0,T].
\end{equation}
The optimization problem formulated in
\cite{jiaPolicyEvaluationTemporaldifference2022} is to minimize the
\emph{martingale loss} $$ \min_\theta \frac{1}{2} \mathbb{E}\int_0^T
\biggl| \xi + \int_t^T r_s\,ds - J^\theta_t \biggr|^2\,dt, $$
where $J^\theta$ is parameterized process to model \(\tilde{y}\).
This objective is exactly the BML value of the trial solution $(J^\theta,0)$ multiplied by a
constant factor $1/2$.
Indeed,
the residual error process \(R\) for the $(J^\theta,0)$ is
$R_t = J^\theta_t - \xi - \int_t^Tr_s\,ds$,
showing that
$$
\mathbb{E}\int_0^T |R_t|^2\,dt = \mathbb{E}\int_0^T
\biggl| \xi + \int_t^T r_s\,ds - J^\theta_t \biggr|^2\,dt.
$$
In \cite{jiaPolicyEvaluationTemporaldifference2022},
the objective is already given in integral form and is proved to be exactly the mean squared error of the \(Y\)-component, i.e.,
the error between the trial value process and the true one.
This interpretation is close in spirit to ours,
but it characterizes only the backward process $Y$.
The reason is that the policy evaluation problem considered there reduces to a linear decoupled BSDE,
and the method is formulated directly at the level of the value process.
From the BML viewpoint of the present paper,
this corresponds to a restricted trial solution in which the \(Z\)-component is fixed as $\tilde z=0$.

\textbf{Relation to the deep BSDE-ML method
\cite{wangDeepBSDEMLLearning2022}.}
That work studies the same policy evaluation problem as the martingale-loss method,
namely the linear decoupled BSDE \eqref{eq:simple-BSDE}.
In this sense,
it is almost parallel to \cite{jiaPolicyEvaluationTemporaldifference2022}.
While the martingale-loss method is formulated to characterize the \(Y\)-component,
deep BSDE-ML is formulated to characterize the \(Z\)-component.
More precisely, \cite{wangDeepBSDEMLLearning2022} proves that its proposed loss is exactly the mean squared error between the trial diffusion term $\tilde{z}$ and the true solution $Z$, giving a mean-squared interpretation analogous to that of the martingale loss.

Its objective is also closely related to the deep BSDE method. For \eqref{eq:simple-BSDE},
the deep BSDE-ML loss effectively coincides with the terminal-matching objective of deep BSDE,
except that the initial value is not treated as a free decision variable.
Instead, \(Y_0\) is fixed to the expectation of the cumulative reward $\mathbb{E}\bigl[\xi+\int_0^T r_t\,dt\bigr]$.
From the viewpoint of the present paper,
deep BSDE-ML can therefore be regarded as a \(Z\)-only counterpart of the martingale-loss method within the same policy evaluation setting.

\textbf{Relation to the robust deep FBSDE method
\cite{anderssonConvergenceRobustDeep2023}.}
That work considers strongly coupled FBSDEs arising from stochastic control and develops a modified deep-learning-based solver for this problem class.
Similar to the deep BSDE-ML viewpoint,
the method parameterizes the \(Z\)-component and treat the initial value of the backward equation as a term depending on the \(Z\)-component.
It exploits the stochastic control interpretation of the FBSDE and introduces a new loss consisting of two parts: a control-cost term, corresponding to the mean of the stochastic cost, and a variance penalty term, which coincides with the mean squared terminal mismatch.

In \cite{anderssonConvergenceRobustDeep2023},
the error analysis focuses on the discretized objective and the convergence of the associated numerical approximation.
By contrast, our framework starts from a continuous-time integral-form objective for a general trial pair \((\tilde y,\tilde z)\), and the main theory concerns the interpretation of that objective itself through the Picard residual. From this perspective, \cite{anderssonConvergenceRobustDeep2023} may be viewed as a problem-specific robust objective design for control-induced coupled FBSDEs, whereas our contribution is a continuous-time residual-based formulation to general coupled FBSDEs.

\textbf{Distinctions and Highlights.}
Table~\ref{tab:relation-existing-methods} summarizes the main differences between the proposed BML framework and relevant optimization-based methods.
The novelty of the present work is a framework-level interpretation:
we formulate a continuous-time residual objective, connect it exactly to the Picard operator, and derive convergence and error statements in terms of the same objective minimized numerically.
Moreover, the framework is not restricted to optimizing only \(Y\), only \(Z\), or only the initial value \(Y_0\), and the same interpretation applies to general coupled FBSDEs.

From a practical viewpoint, our parallel parameterization treats \(y^\theta\) and \(z^\theta\) symmetrically and evaluates them directly at sampled space-time points once the forward path has been generated.
This differs from the usual deep BSDE recursion, which is sequential along the time grid, and may offer potential benefits for parallel computation.

It is worth noting that existing error analyses such as \cite{hanConvergenceDeepBSDE2020} for the deep BSDE method, and \cite{anderssonConvergenceRobustDeep2023} for the robust deep FBSDE method, are carried out for discretized solvers.
By contrast,
the error analysis in the present paper is based on the objective value being minimized, namely the BML value, which is defined in integral form in continuous time.
Therefore, the time-discretization error of the numerical solver is not covered by the current theory.
This is a limitation of the present work and will be an important topic for future research.

\subsection{Analytical Examples}
We now provide two examples and evaluate their BML values to demonstrate how
the proposed framework works. Note that this subsection evaluates BML
values analytically without taking time discretization. The considered
examples will be examined numerically in the next section.

\begin{example}[A Toy BSDE]
\label{example:a-toy-BSDE}
Let $W$ be a \(d\)-dimensional standard Brownian motion. Consider the
following toy BSDE
\cite{jiaPolicyEvaluationTemporaldifference2022,wangProbabilisticFrameworkHowards2023},
\begin{equation}
\label{eq:a-toy-BSDE}
Y_t = \frac{ |W_T|^2}{d} - \int_t^T\,ds - \int_t^T Z_s^\intercal \,dW_s, \quad t\in[0,T].
\end{equation}
Consider the parameterization scheme \(\tilde{y}_t = \theta_1 |W_t|^2\) and
\(\tilde{z}_t = \theta_2 W_t\) for \(\theta_1,\theta_2\in \mathbb{R}\).
\end{example}
To evaluate the BML value for BSDE~\eqref{eq:a-toy-BSDE} corresponding
to \((\theta_1,\theta_2)\), we rewrite the residual error
process
$$ \begin{aligned}
R_t =& \theta_1|W_t|^2 - \frac{|W_T|^2}{d}
  +(T-t) + \theta_2 \int_t^T W_s^\intercal \,dW_s \\
  =& \Bigl(\theta_1 - \frac{1}{2}\theta_2 \Bigr)|W_t|^2 +
  \Bigl(\frac{1}{2}\theta_2 - \frac{1}{d} \Bigr) |W_T|^2 \\
  &-
  \Bigl(\frac{d}{2}\theta_2 - 1\Bigr) (T-t) . \end{aligned} $$
Therefore, the BML value \(\mathbb{E}\int_0^T |R_t|^2\,dt\) is a
quadratic function of \(\theta_1\) and \(\theta_2\), and has a global
minimizer \(\theta_1^*=\frac{1}{2}\theta_2^*=\frac{1}{d}\). It can be verified by Itô's
formula that the trial solution \((\tilde{y},\tilde{z})\) with optimal
parameters is indeed the true solution \((Y^*,Z^*)\).

A direct calculation shows that
\begin{equation}
\label{eq:theoretical-BML-toy-BSDE-parameterization-scheme-1}
\operatorname{BML}(\theta_1,\theta_2) =
\frac{T^3}{3}(d+2)d\Bigl(\theta_1 - \frac{1}{d}\Bigr)^2 +
\frac{T^3}{3}d \Bigl( \theta_2 - \frac{2}{d} \Bigr)^2,
\end{equation}
which can
also be obtained from \(\|(\tilde{y}-Y^*,\tilde{z}-Z^*)\|_\mu\) as
suggested by
Theorem~\ref{theorem:the-Picard-interpretations-of-BML-values}.
With a slight abuse of notation,
we use $\operatorname{BML}(\theta_1,\theta_2)$ to denote the BML value of the parameterized processes $\{\tilde{y},\tilde{z}\}$ corresponding to the given parameters $(\theta_1,\theta_2)$.

In general, the BML value for a linear BSDE is quadratic in the
parameter vector $\theta$ when both $\tilde{y}$ and $\tilde{z}$ are
linear in $\theta$.

\begin{example}[A coupled FBSDE]
\label{example:a-coupled-FBSDE}
Let $W$ be a \(d\)-dimensional standard Brownian motion. Consider the following coupled FBSDE
\cite{benderTimeDiscretizationMarkovian2008,hanConvergenceDeepBSDE2020} \begin{equation}
\label{eq:a-coupled-FBSDE}
\left\{\begin{aligned}
X_t= & x_0 +
\int_0^t \sigma_0 Y_s\,dW_s, \\
Y_t= &\int_t^T\left[-r Y_s+\frac{ \sigma_0^2}{2} e^{-3 r(T-s)}\Bigl( A \sum_{j=1}^d \sin
\left(X_{j, s}\right)\Bigr)^3\right]\,ds\\
&+A \sum_{j=1}^d \sin \left(X_{j, T}\right)-\int_t^TZ_s^\intercal\,d W_s,
\end{aligned}\right.
\end{equation}
where \(A,\sigma_0,r\) are constants and \(X_{j,s}\)
refers to the \(j\)-th component of \(X_s\).

Consider the parameterization scheme $$ \left\{ \begin{aligned}
\widetilde{X}_{j,t} &= x_0 + \int_0^t \sigma_0 \tilde{y}_t \,dW_{j,s},
\\ \tilde{y}_t &= \theta_1 e^{-r(T-t)}\sum_{j'=1}^d \sin
\left(\widetilde{X}_{j', t}\right),\\ \tilde{z}_{j,t} &= \theta_2
e^{-2r(T-t)}\left(\sum_{j'=1}^d \sin (\widetilde{X}_{j', t})\right)\cos
(\widetilde{X}_{j,t}) \end{aligned} \right. $$ for \(\theta_1,\theta_2\in \mathbb{R}\).
\end{example}
To evaluate the BML value for FBSDE~\eqref{eq:a-coupled-FBSDE}
corresponding to \((\theta_1,\theta_2)\), rewrite the residual error
process $$ \begin{aligned}
R_t =& \tilde{y}_t - A \sum_{j=1}^d \sin
  \left(\widetilde{X}_{j, T}\right)+\int_t^T \tilde{z}_s^\intercal\,d   W_s\\
  &-\int_t^T\left[-r
    \tilde{y}_s+\frac{ \sigma_0^2}{2} e^{-3 r(T-s)}\Bigl( A
    \sum_{j=1}^d \sin \left(\widetilde{X}_{j,
      s}\right)\Bigr)^3\right]\,ds \\
      =& \theta_1 \sum_{j=1}^d \sin \left(\widetilde{X}_{j,
    T}\right)-A \sum_{j=1}^d \sin \left(\widetilde{X}_{j,
    T}\right) \\
    &+\int_t^T\left[-r \tilde{y}_s+\frac{ \sigma_0^2}{2}
    e^{-3 r(T-s)}\Bigl( \theta_1 \sum_{j=1}^d \sin
    \left(\widetilde{X}_{j, s}\right)\Bigr)^3\right]\,ds \\
    &-\int_t^T\left[-r \tilde{y}_s+\frac{ \sigma_0^2}{2} e^{-3
      r(T-s)}\Bigl( A \sum_{j=1}^d \sin \left(\widetilde{X}_{j,
      s}\right)\Bigr)^3\right]\,ds \\
      &+(\theta_2 -
  \sigma_0\theta_1^2)\int_t^T e^{-2r(T-s)} \sum_{j=1}^d \sin
  \left(\widetilde{X}_{j, s}\right) \langle \cos \widetilde{X}_s
  ,\,dW_s \rangle \\
  =& (\theta_1^3 - A^3)\int_t^T \frac{
    \sigma_0^2}{2} e^{-3 r(T-s)}\Bigl( \sum_{j=1}^d \sin
  \left(\widetilde{X}_{j, s}\right)\Bigr)^3\,ds \\
  &+(\theta_2 -
  \sigma_0\theta_1^2)\int_t^T e^{-2r(T-s)} \sum_{j=1}^d \sin
  \left(\widetilde{X}_{j, s}\right) \langle \cos \widetilde{X}_s
  ,\,dW_s \rangle\\
  &+(\theta_1 - A)\sum_{j=1}^d \sin
  \left(\widetilde{X}_{j, T}\right). \end{aligned} $$
Therefore, the BML value \(\mathbb{E}\int_0^T |R_t|^2\,dt\) has a
global minimizer \(\theta_1^* = A\) and \(\theta_2^*=\sigma_0
A^2\). It can be verified by Itô's formula that the trial solution
\((\tilde{y},\tilde{z})\) with optimal parameters is indeed the true
solution \((Y^*,Z^*)\).

The parameterization schemes discussed above are not practical as they
rely on prior knowledge of the solution form. They are provided here
for illustration purposes. In practice, the trial solution
\((\tilde{y},\tilde{z})\) is parameterized by generic function
approximators, e.g., neural networks. In the next section, we will revisit
these examples under practical parameterization schemes and provide
numerical results obtained via gradient-based optimization methods.

\section{Numerical Examples}
\label{section:experiments}
In this section, we numerically revisit the examples described in the
previous section and consider an additional high-dimensional example derived from
a nonlinear stochastic optimal control problem. The purpose of these experiments is to validate the proposed BML objective and its theoretical interpretation, including the relation between small BML values and small solution errors in the tested problems. They are not intended as a comprehensive benchmark against existing numerical solvers.

\subsection{Visualize BML Values}
After choosing a parameterization scheme for trial solutions, BML can
be visualized as a finite-dimensional function.

\begin{example}[A Toy BSDE---Revision 1]
\label{example:a-toy-BSDE-revision-1}
Set Example~\ref{example:a-toy-BSDE} with \(T=1\) and \(d=3\).
Visualize empirical BML value
\eqref{eq:approximate-optimization-problem} and theoretical BML value
\eqref{eq:theoretical-BML-toy-BSDE-parameterization-scheme-1} by
varying \((\theta_1,\theta_2)\). According to
Example~\ref{example:a-toy-BSDE}, the optimal parameters are
$\theta^*_1= \frac{1}{d},~\theta^*_2 = \frac{2}{d}$.
\end{example}

For each \((\theta_1,\theta_2)\), we estimate empirical BML
\eqref{eq:approximate-optimization-problem} for
BSDE~\eqref{eq:a-toy-BSDE} using \(10^6\) Monte Carlo samples and
\(10^3\) time intervals with Euler-Maruyama method. Results are
presented in Figure~\ref{fig:visualize-BML-toy-BSDE}.

\begin{example}[A Coupled FBSDE---Revision 1]
\label{example:a-coupled-FBSDE-revision-1}
Set Example~\ref{example:a-coupled-FBSDE} with \(T=1\) and \(d=3\).
Set additional problem-specific parameters to
\(A=1,~\sigma_0=0.3,~r=0.1\) and $x_0=(
\frac{\pi}{2},\frac{\pi}{2},\frac{\pi}{2})$. Visualize empirical BML
value \eqref{eq:approximate-optimization-problem} by varying
\((\theta_1,\theta_2)\). According to
Example~\ref{example:a-coupled-FBSDE}, the optimal parameters are
$\theta^*_1= A,~\theta^*_2 = \sigma_0A^2$.
\end{example}

For each \((\theta_1,\theta_2)\), we estimate empirical BML
\eqref{eq:approximate-optimization-problem} for
FBSDE~\eqref{eq:a-coupled-FBSDE} using \(10^6\) Monte Carlo samples
and \(10^3\) time intervals with Euler-Maruyama method. Results are
presented in Figure~\ref{fig:visualize-BML-coupled-FBSDE}.

\subsection{Optimize BML Values}
With stochastic gradient descent, the BML value can be optimized with
a relatively small number of samples at each iteration.

To avoid requiring prior knowledge of the true solution form, this
subsection demonstrates different parameterization schemes for
optimization. The optimization algorithm is chosen as Adam, a popular
variant of SGD \cite{kingmaAdamMethodStochastic2015}.  Considering
the stochastic nature of this algorithm, we execute it multiple times
and report metrics during optimization via their mean and standard
error across different runs.

\begin{example}[A Toy BSDE---Revision 2]
\label{example:a-toy-BSDE-revision-2}
Optimize the empirical BML value in
Example~\ref{example:a-toy-BSDE-revision-1} under the parameterization
scheme $$ \tilde{y}_t=\theta_1 |W_t|^4, \quad \tilde{z}_t =
\theta_2|W_t|^2W_t. $$ As a benchmark, the theoretical BML value under
this parameterization scheme is
\begin{equation}
\label{eq:theoretical-BML-toy-BSDE-parameterization-scheme-2}
\operatorname{BML}(\theta_1,\theta_2)= C_1(\theta_1 - \theta_1^*)^2 + C_2(\theta_2 - \theta_2^*)^2 + C_3,
\end{equation}
where \(C_1,C_2,C_3\) are positive constants and \(\theta_1^*=
\frac{5}{4d(d+6)T}\approx0.0463,~\theta_2^*=\frac{5}{2d(d+4)T}\approx0.119\). This theoretical
expression is calculated via
Theorem~\ref{theorem:the-Picard-interpretations-of-BML-values}. The
theoretical minimum is $C_3\approx0.297$.
\end{example}

This example suggests that the optimal \(\tilde{z}\) may not always
coincide with the diffusion term of \(d\tilde{y}_t\). In this
example, differentiating \(\tilde{y}_t\) yields a diffusion term
\(4\theta_1|W_t|^2 W_t\). However, the optimal parameter for
\(\tilde{z}\) is \(\theta_2^*\neq 4 \theta_1^*\). By
Theorem~\ref{theorem:the-Picard-interpretations-of-BML-values}, the
optimal \(\tilde{y}\) minimizes \(\mathbb{E}\int_0^T| \tilde{y}_t -
\frac{1}{d}|W_t|^2|^2\,dt\), while the optimal \(\tilde{z}\) minimizes
\(\mathbb{E}\int_0^T| \tilde{z}_t -
\frac{2}{d}W_t|^2\,dt\). Minimizers of these problems heavily depend
on their parameterization schemes.

At each optimization step, we estimate empirical BML
\eqref{eq:approximate-optimization-problem} using $10^3$ Monte Carlo
samples and $10^3$ time intervals.  Learning rates set to $10^{-3}$
for $\theta_1$ and $3 \times 10^{-3}$ for $\theta_2$.  Meanwhile, we
plot the parameter values $(\theta_1,\theta_2)$ during the
optimization. Results are presented in
Figure~\ref{fig:optimize-BML-toy-BSDE}.

\begin{example}[A Coupled FBSDE---Revision 2]
\label{example:a-coupled-FBSDE-revision-2}
Optimize the empirical BML value in
Example~\ref{example:a-coupled-FBSDE-revision-1} under the
parameterization scheme $$ \left\{ \begin{aligned} \widetilde{X}_{j,t}
  &= x_0 + \int_0^t \sigma_0 y^\theta(s,\widetilde{X}_{s}) \,dW_{j,s},
  \\ \tilde{y}_t &= y^\theta(s,\widetilde{X}_{s}), \qquad \tilde{z}_t
  = z^\theta(s,\widetilde{X}_{s}). \end{aligned} \right. $$ Here,
parameterized functions \(y^\theta\) and \(z^\theta\) are neural
networks. As a benchmark, the true solution $(Y^*,Z^*)$ can be
simulated via choosing optimal $(\theta_1^*,\theta_2^*)$ in
Example~\ref{example:a-coupled-FBSDE-revision-1}.
\end{example}

We construct \(y^\theta\) and \(z^\theta\) as functions of $(t, x) \in
[0,T] \times \mathbb{R}^n$. First, initialize three one-hidden-layer
ReLU neural networks: $\phi_t^\theta: [0,T] \to
\mathbb{R}^{n_t},~\phi_y^\theta: \mathbb{R}^{n_t+n} \to
\mathbb{R}^{m},~\phi_z^\theta:\mathbb{R}^{n_t+n} \to
\mathbb{R}^{d}$. Then, for any $(t,x)$, set
$$ y^\theta(t,x):= \phi_y^\theta(\phi_t^\theta(t), x), \quad
z^\theta(t,x):=\phi_z^\theta(\phi_t^\theta(t),x). $$ The hidden sizes
of these one-hidden-layer networks are 4 for $\phi_t$, 32 for
$\phi_y$, and 32 for $\phi_z$. Finally, the embedding dimension $n_t$
is set to 4.

At each optimization step, we estimate empirical BML
\eqref{eq:approximate-optimization-problem} using $10^3$ Monte Carlo
samples and $20$ time intervals. The learning rate is set to $10^{-3}$ for all parameters.  Meanwhile, we estimate errors between the
trial solution $(\tilde{y}, \tilde{z})$ and the true solution
$(Y^*,Z^*)$ under different norms. Results are presented in
Figure~\ref{fig:optimize-BML-coupled-FBSDE}.

\begin{figure}[h]
  \centering
  \includegraphics[width=.45\textwidth]{./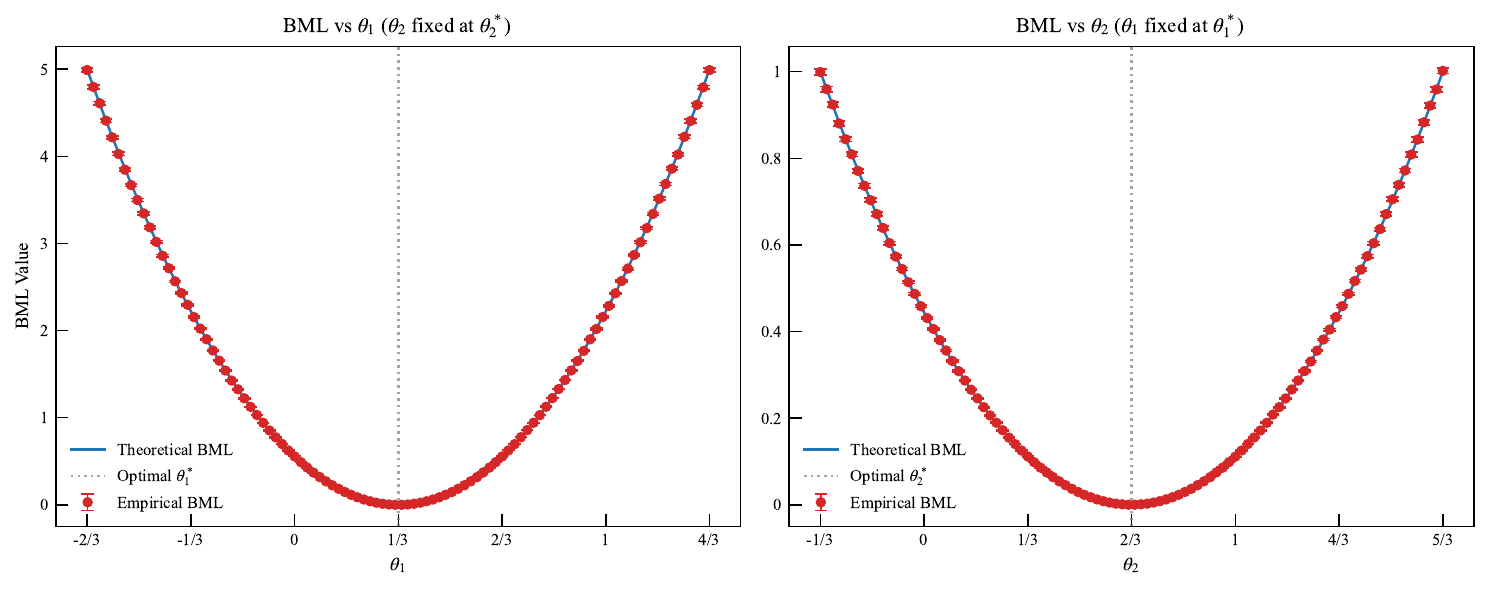}
  \caption{\label{fig:visualize-BML-toy-BSDE}Visualization of the BML value for
    Example~\ref{example:a-toy-BSDE-revision-1}. Left: $\theta_1 \in
    [\theta_1^*-1,\theta_1^*+1]$ with $\theta_2=\theta_2^*$. Right:
    $\theta_2\in[\theta_2^*-1,\theta_2^*+1]$ with
    $\theta_1=\theta_1^*$. Error bars indicate 99.7\% confidence intervals of empirical expectations.
  }
\end{figure}

\begin{figure}[h]
  \centering
  \includegraphics[width=.45\textwidth]{./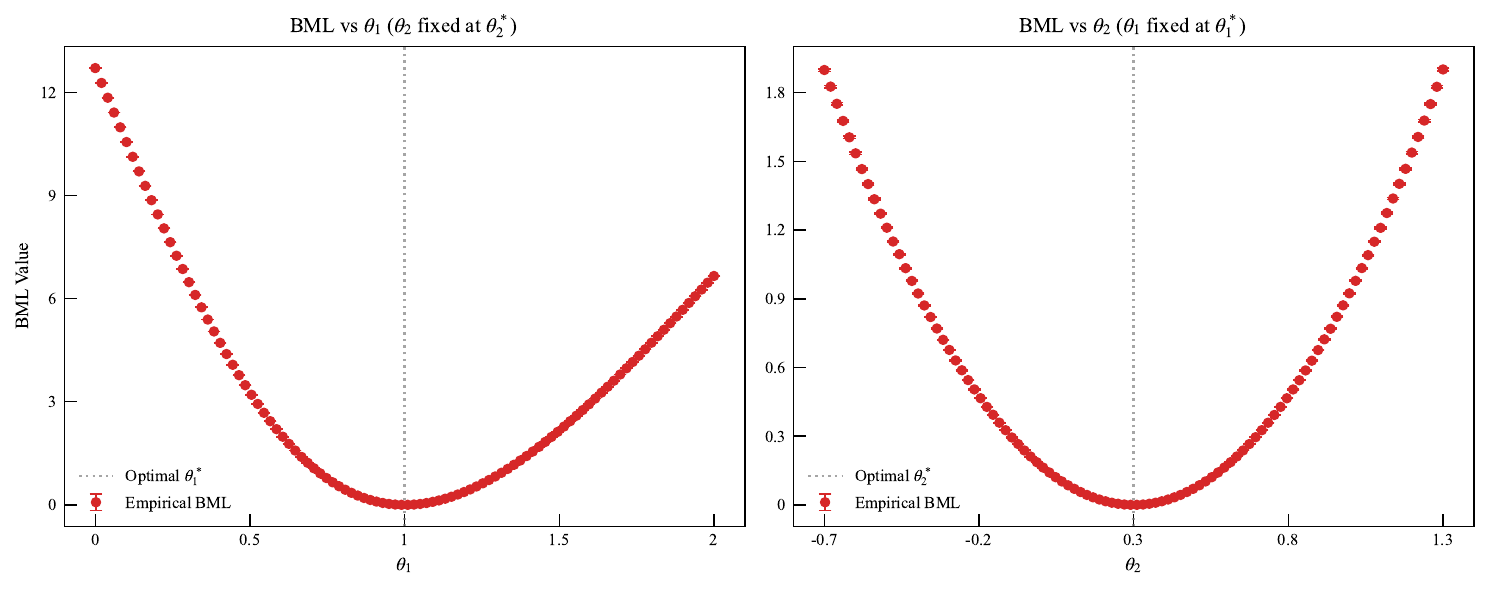}
  \caption{\label{fig:visualize-BML-coupled-FBSDE}Visualization of the BML value for
    Example~\ref{example:a-coupled-FBSDE-revision-1}. Left: $\theta_1
    \in [\theta_1^*-1,\theta_1^*+1]$ with
    $\theta_2=\theta_2^*$. Right:
    $\theta_2\in[\theta_2^*-1,\theta_2^*+1]$ with
    $\theta_1=\theta_1^*$. Error bars indicate 99.7\% confidence
    intervals of empirical expectations.  }
\end{figure}

\begin{figure}[h]
  \centering
  \includegraphics[width=.45\textwidth]{./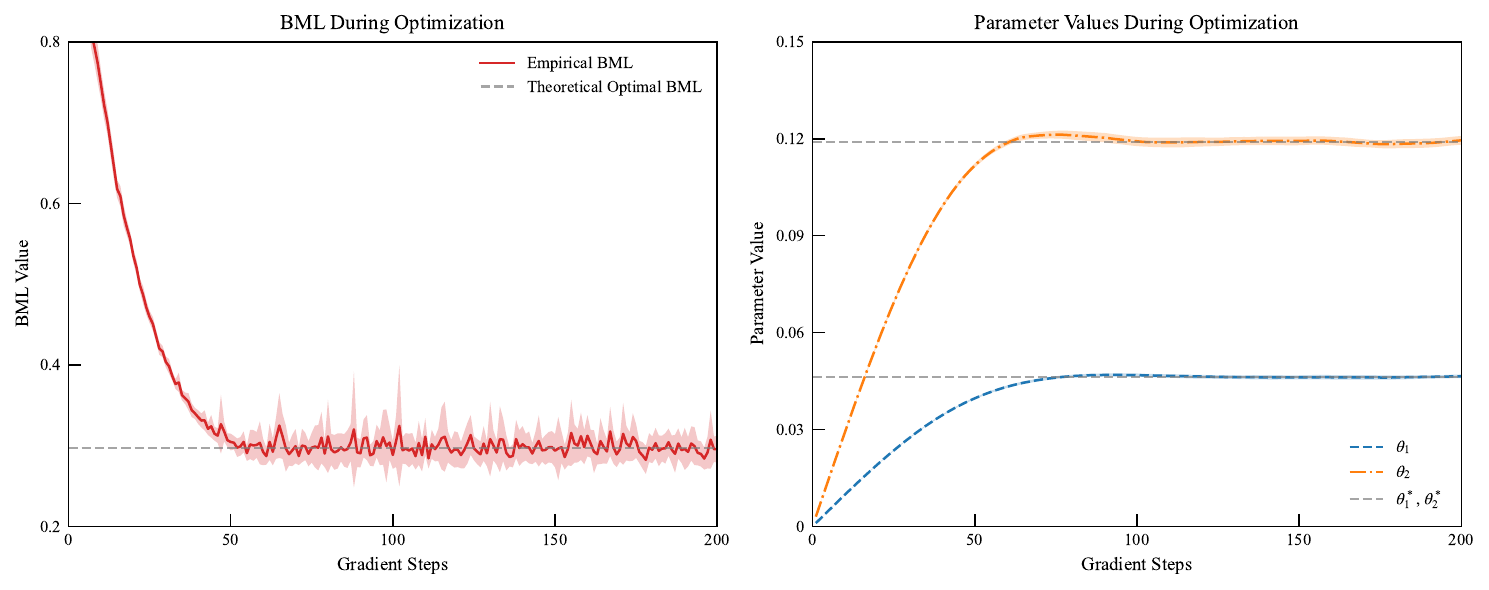}
  \caption{\label{fig:optimize-BML-toy-BSDE}Optimization of the empirical
    BML value for Example~\ref{example:a-toy-BSDE-revision-2}. Left:
    empirical BML values and its theoretical optimum. Right: parameter
    values $\theta_1,\theta_2$, and their theoretical optimum.  Metrics
    averaged over 50 independent runs; shaded regions indicate $\pm 3$
    standard errors.}
\end{figure}

\subsection{A 1000 Dimensional HJB Equation}
The proposed framework can be applied to solve
Hamilton--Jacobi--Bellman equations.

\begin{example}[A 1000D HJB Equation]
\label{example:a-HJB-decoupled-FBSDE}
Consider the following stochastic optimal control problem in
\(n\)-dimensions
\cite{hanSolvingHighdimensionalPartial2018,huTacklingCurseDimensionality2024}
$$ \begin{aligned} \min&\quad \mathbb{E}\biggl[g(x_T) + \int_0^T
    \|u_t\|^2\,dt\biggr] \\ \operatorname{s.t.}&\quad x_t = x_0 +
  2\sqrt{\lambda} \int_0^t u_s\,ds + \sqrt{2}\,W_t, \end{aligned} $$
where \(\lambda\) is a given positive constant, $\{x_t\}$ and
$\{u_t\}$ are processes valued in $\mathbb{R}^n$. The associated HJB
equation is $$ \partial_t v + \Delta v - \lambda \|\nabla v\|^2 = 0,
\quad v(T, \cdot) = g(\cdot). $$ By the nonlinear Feynman-Kac formula,
the value function $v$ is related to the solution $(X^*,Y^*,Z^*)$ of
the following FBSDE
\begin{equation}
\label{eq:HJB-decoupled-FBSDE}
\left\{ \begin{aligned}
X_t &= x_0 + \sqrt{2}W_t, \\
Y_t &= g(X_T) + \int_t^T -\frac{\lambda}{2} |Z_s|^2\,ds - \int_t^T Z_s^\intercal\,dW_s
\end{aligned} \right.
\end{equation}
via $Y^*_t=v(t,X_t^*)$. In particular, the optimal cost
$v(0,x_0)=Y^*_0$.

Set $T=1,~\lambda=1,~x_0=(0,0,\ldots,0)$, and the terminal
condition $g(x):=\ln((1+|x|^2)/2)$. As a benchmark, the optimal cost
$Y^*_0$ can be obtained by applying Hopf-Cole transformation to the
HJB equation
\begin{equation}
\label{eq:benchmark-Y0-HJB-decoupled-FBSDE}
Y^*_0 = v(0,x_0) = - \frac{1}{\lambda} \ln \biggl(
\mathbb{E}\Bigl[ \exp\bigl(- \lambda g(x_0 +
  \sqrt{2}W_T)\bigr)\Bigr]\biggr).
\end{equation}
\end{example}

To solve FBSDE~\eqref{eq:HJB-decoupled-FBSDE} and obtain the optimal
cost, we optimize the empirical BML value under the parameterization
scheme
$$ \left\{ \begin{aligned} \widetilde{X}_{t} &= x_0 + \sqrt{2}W_{t},
  \\ \tilde{y}_t &= y^\theta(s,\widetilde{X}_{s}), \qquad \tilde{z}_t
  = z^\theta(s,\widetilde{X}_{s}). \end{aligned} \right. $$ The
construction of $y^\theta$ and $z^\theta$ follows
Example~\ref{example:a-coupled-FBSDE-revision-2}.

At each optimization step, we estimate empirical BML
\eqref{eq:approximate-optimization-problem} using $10^3$ Monte Carlo
samples and $20$ time intervals. The learning rate is set to $10^{-3}$
for all parameters. Meanwhile, we plot the prediction $\tilde{y}_0$
during the optimization. To demonstrate the capability of the method in
high dimensions, we solve the problem for
$n\in\{100,250,500,1000\}$. Results are presented in
Figure~\ref{fig:optimize-BML-HJB-decoupled-FBSDE}, with final
predictions and the relative errors reported in
Table~\ref{table:Y0-error-HJB-decoupled-FBSDE}.

At first glance, the decreasing relative errors with increasing
dimension in Table~\ref{table:Y0-error-HJB-decoupled-FBSDE} may appear
anomalous. This phenomenon emerges because higher-dimensional networks
possess greater approximation capacity---since the parameter count in
a one-hidden-layer network scales linearly with the output dimension
$n$. The diminishing BML values directly reflect this enhanced
representational power, confirming the effectiveness of our
minimization approach.
This counterintuitive phenomenon is consistent with the proposed framework:
in this experiment, smaller optimized BML values are accompanied by smaller relative errors.

\begin{figure}[h]
  \centering
  \includegraphics[width=.45\textwidth]{./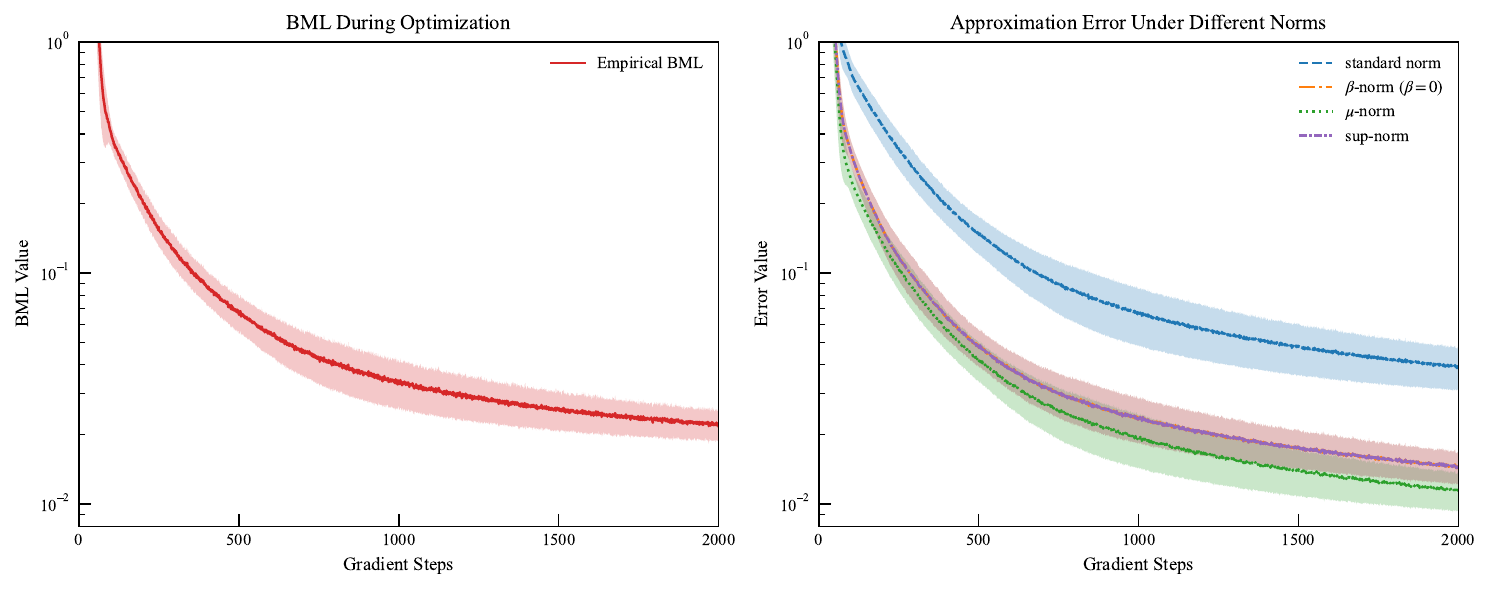}
  \caption{\label{fig:optimize-BML-coupled-FBSDE}Optimization of the
    empirical BML value for
    Example~\ref{example:a-coupled-FBSDE-revision-2}. Left: empirical
    BML values. Right: approximation error between $(\tilde{y},
    \tilde{z})$ and $(Y^*,Z^*)$ under different norms. In the right
    panel, line $\|(\tilde{y}, \tilde{z})-(Y^*,Z^*)\|_{\beta}^2$
    nearly coincides with line $\|(\tilde{y},
    \tilde{z})-(Y^*,Z^*)\|_\textrm{sup}^2$. Metrics averaged over 50
    independent runs; shaded regions indicate $\pm 3$ standard
    errors.}
\end{figure}

\begin{figure}[h]
  \centering
  \includegraphics[width=.45\textwidth]{./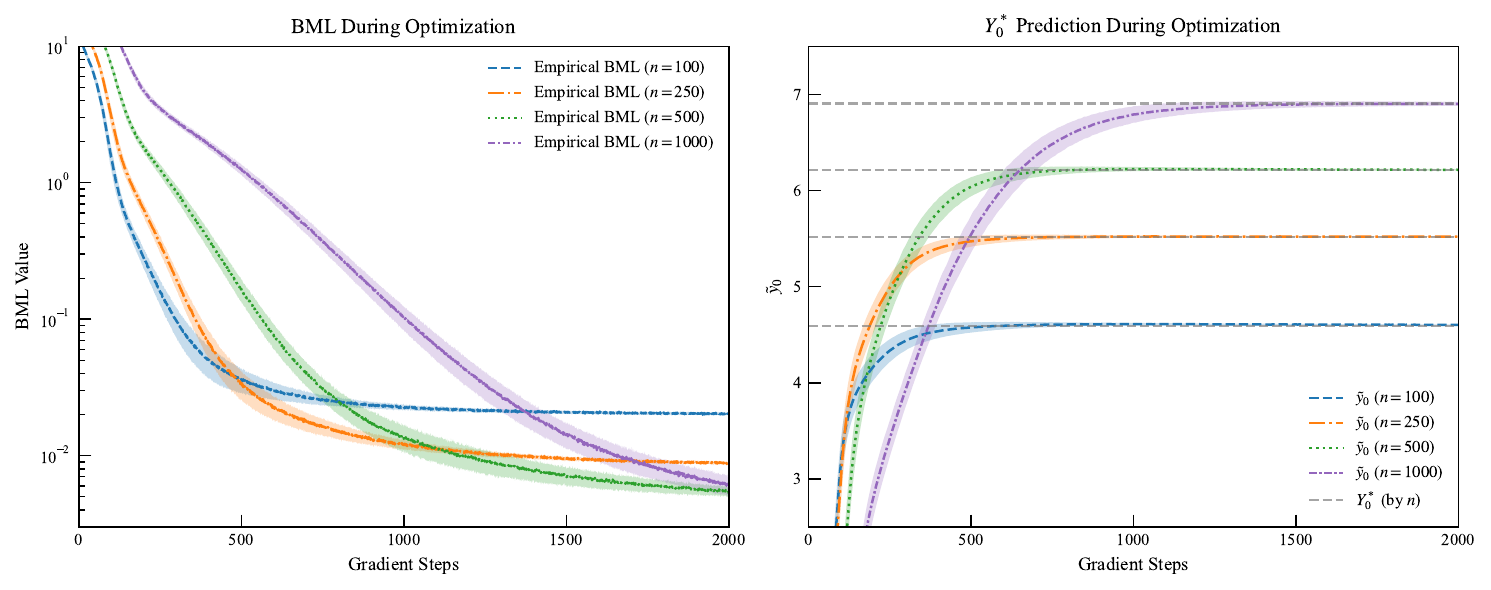}
  \caption{\label{fig:optimize-BML-HJB-decoupled-FBSDE}Optimization of the
    empirical BML value for
    Example~\ref{example:a-HJB-decoupled-FBSDE}. Left: empirical BML
    values. Right: predictions $\tilde{y}_0$. Metrics averaged over 50
    independent runs; shaded regions indicate $\pm 3$ standard
    errors.}
\end{figure}

\begin{table}[h]
\centering
\caption{Final results for
  Example~\ref{example:a-HJB-decoupled-FBSDE}. Benchmark values $Y_0^*$ are
  obtained by the analytical
  expression~\eqref{eq:benchmark-Y0-HJB-decoupled-FBSDE} using $10^9$
  Monte Carlo samples.}
\label{table:Y0-error-HJB-decoupled-FBSDE}
\begin{tabular}{ccccc}
\toprule
Dimension & Final BML & Optimal cost & Prediction & Rel. error \\
($n$) & value & ($Y_0^*$) & ($\tilde{y}_0$) & ($|\tilde{y}_0 - Y_0^*|/Y_0^*$) \\
\midrule
100 & 0.020 & 4.590 & 4.604 & 0.30\% \\
250 & 0.009 & 5.515 & 5.520 & 0.09\% \\
500 & 0.005 & 6.212 & 6.215 & 0.05\% \\
1000 & 0.006 & 6.906 & 6.901 & 0.07\% \\
\bottomrule
\end{tabular}
\end{table}

\section{Conclusion and Future Directions}
\label{section:conclusion}
Instead of explicitly performing the Picard iteration step-by-step, we
propose to find its fixed point directly by minimizing the residual
error of the fixed point equation. For any pair of processes in the
solution space, we use an integral-form value to quantify \emph{how
well} it fits the FBSDE. This value, though defined solely on the
trial solution, is shown equal to the residual error of the fixed
point equation under a particular norm. This result suggests that
minimizing this value could yield the fixed point of the Picard
operator. Relevant convergence results and error bounds are developed
accordingly.


We also positioned the proposed framework relative to several
representative optimization-based methods in the literature. In
particular, we showed that the BML viewpoint provides a common
objective-level interpretation of methods such as deep BSDE,
martingale-loss, Deep BSDE-ML, while differing
from them in scope and emphasis. Our focus is not on a new
problem-specific architecture or a discretization-specific solver, but
on a continuous-time residual formulation for a general trial pair
\((\tilde y,\tilde z)\), together with convergence and error
statements stated in terms of the same objective being minimized.
The numerical examples in this paper are intended to support this
objective-based viewpoint by validating the proposed loss design and
its theoretical interpretation.

Future directions include refining the error bounds under more
sophisticated conditions for coupled FBSDEs, exploring other choices of the
\(\mu\)-measure other than the Lebesgue measure, considering the time-discretization error introduced in estimating the objective function, and carrying out broader empirical comparisons with existing numerical solvers under carefully matched implementations.


\section{Appendix}

\subsection{Existence Results for BSDEs and FBSDEs}
\label{appendix:existence-results}
For completeness, we recall here two standard existence results based
on contraction mapping, together with related remarks.

\begin{lemma}[BSDE Existence]
\label{lemma:BSDE-Existence}
Let Assumption~\ref{assumption:standing-assumption-for-BSDEs}
hold. Then, a pair \((Y, Z) \in \mathcal{M}[0,T]\) is an adapted
solution of BSDE~\eqref{eq:BSDE} if and only if it is a fixed point of
the Picard operator defined by
\eqref{eq:Picard-definition-for-FBSDE}. Moreover, this Picard
operator has a unique fixed point and is a strict contraction under
the norm
\begin{equation}
\|(Y,Z)\|_\beta := \left\{ \mathbb{E} \int_0^Te^{2\beta
s}\Bigl(|Y_s|^2 + |Z_s|^2\Bigr) \,ds \right\}^{1/2}
\end{equation}
for some constant \(\beta \in \mathbb{R}\).
\end{lemma}

\begin{proof}
See \cite[pp. 140--141]{phamContinuoustimeStochasticControl2009}.
\end{proof}


\begin{lemma}[FBSDE Existence]
\label{lemma:FBSDE-Existence}
Let Assumption~\ref{assumption:standing-assumption-for-FBSDEs}
hold. Then, a pair \((Y, Z) \in \mathcal{M}[0,T]\) is an adapted
solution of FBSDE~\eqref{eq:FBSDE} if and only if it is a fixed point
of the Picard operator defined by
\eqref{eq:Picard-definition-for-FBSDE}.

Furthermore, assume that there exist constants \(L_0\) and \(L_g\)
such that the following inequalities hold almost surely:
$$
\left\{
\begin{aligned}
&|\sigma(t,x,y,\hat{z}) - \sigma(t,x,y,\check{z})|
\leq L_0|\hat{z} - \check{z}|,\\
&\qquad \forall (x,y)\in\mathbb{R}^n\times\mathbb{R}^m,\;
\hat{z},\check{z}\in\mathbb{R}^{m\times d}, \text{ a.e. } t \geq 0,\\
&|g(\hat{x}) - g(\check{x})| \leq L_g|\hat{x} - \check{x}|,\\
&\qquad \forall \hat{x},\check{x}\in\mathbb{R}^n.
\end{aligned}
\right.
$$
If \(L_0L_g < 1\), then there exists a constant \(T_0 > 0\) such that
for any \(T\in(0,T_0]\) and any initial point
\(x_0\in\mathbb{R}^n\), the Picard operator has a unique fixed point
and is a strict contraction under the following norm
\begin{equation}
\|(Y,Z)\|_\textrm{sup} :=
\sup_{t\in[0,T]} \left\{ \mathbb{E} |Y_t|^2 +
\mathbb{E}\int_t^T |Z_s|^2\,ds \right\}^{1/2}.
\end{equation}
\end{lemma}

\begin{proof}
See \cite[pp. 19--22]{maForwardbackwardStochasticDifferential2007}.
\end{proof}

This theorem requires additional assumptions, particularly the
smallness of the time horizon, to establish the contraction property
of \(\Phi\). This is one drawback of applying fixed-point arguments to
prove existence for coupled FBSDEs. There have been extensive studies
aimed at overcoming this limitation, including the monotonicity
condition
\cite{huSolutionForwardbackwardStochastic1995,pengFullyCoupledForwardbackward1999}
and the four-step scheme \cite{maSolvingForwardbackwardStochastic1994}.
Nevertheless, these approaches are compatible with our framework, and
the Picard operator \(\Phi\) remains well-defined under
Assumption~\ref{assumption:standing-assumption-for-FBSDEs}.

To be clear, ``compatible with our framework'' means only that the BML
definition and its Picard interpretation remain meaningful whenever
such alternative theories guarantee well-posedness of the FBSDE and of
the induced equations associated with a trial pair
\((\tilde y,\tilde z)\). It does not mean that these structural
conditions are checked numerically during training. Rather, they serve
as theoretical assumptions under which the objective admits the same
mathematical interpretation.


\subsection{Proof of Lemma~\ref{lemma:mu-norm}}
\label{appendix:norms-on-the-solution-space}




\begin{proof}
First, we simplify the definition of \(\mu\)-norm by interchanging the
order of integrations. For any \((Y, Z) \in
\mathcal{M}[0,T]\),
\begin{equation}
\begin{aligned}
 & \|(Y, Z)\|_\mu \\
  &= \left\{ \mathbb{E} \int_0^T
  |Y_t|^2 \,\mu(dt) + \mathbb{E} \int_{\{0 \leq t \leq T; t \leq s
    \leq T\}} |Z_s|^2 \,ds \, \mu(dt) \right\}^{1/2} \\
  &= \left\{ \mathbb{E} \int_0^T |Y_t|^2 \,\mu(dt) + \mathbb{E}
  \int_0^T \mu([0,s]) |Z_s|^2 \,ds \right\}^{1/2} \\
  &= \left\{
  \mathbb{E} \int_0^T \biggl( |Y_t|^2 + t|Z_t|^2 \biggr)\,
  dt\right\}^{1/2}.
\end{aligned}
\end{equation}
The last equality holds as \(\mu\) is fixed to the Lebesgue measure.

The triangle inequality of \(\|\cdot\|_\mu\) on the solution space can be obtained
from the Minkowski inequality.


Note that \(\|(Y,Z)\|=0\) implies \(\|(Y,Z)\|_\mu=0\). The
converse implication holds too as \((Y, Z) \in \mathcal{M}[0,T]\), which guarantees
that \(Y\) is a continuous process. In particular, there exists a
\(\mathbb{P}\)-null set \(N\) such that for all \(\omega\in N^c\), the sample path
\(Y(\cdot,\omega)\) is continuous. Let \(X(\omega) = \int_0^T|Y(t,\omega)|^2\,dt \geq 0\). By
assumption, \(\mathbb{E}X=0\). Then, there exists a \(\mathbb{P}\)-null set \(N_0\) such
that for all \(\omega \in N_0^c, ~ X(\omega)=0\). Let \(N_1 = N \cup N_0\). Then, for any
\(\omega \in N_1^c\), the sample path \(|Y(\cdot,\omega)|^2\) is continuous and equals
zero almost everywhere on \([0,T]\), implying that \(y(\cdot,\omega)\) is zero
everywhere on \([0,T]\). In conclusion, there exists a \(P\)-null set
\(N_1\) such that \(\sup_{t\in[0,T]}|Y(t,\omega)|^2=0\) for any \(\omega\in N_1^c\),
implying that \(\mathbb{E}\sup_{t\in[0,T]}|Y(t,\omega)|^2=0\).

Finally, the relationships between \(\mu\)-norm~\eqref{eq:mu-norm},
\(\beta\)-norm~\eqref{eq:beta-norm}, sup-norm~\eqref{eq:sup-norm} and
the standard norm~\eqref{eq:standard-norm} are proved below.

\begin{enumerate}
\item The \(\mu\)-norm is weaker than the \(\beta\)-norm. Noting that all \(\beta\)-norms
are equivalent among different \(\beta \in \mathbb{R}\) (as the exponential function is bounded on $[0,T]$),
it suffices to prove
\(\mu\)-norm is weaker than \(\beta\)-norm for \(\beta=1/2\). Indeed,
in this case
$$ \begin{aligned} \|(Y,Z)\|_\beta &\geq \left\{ \mathbb{E}
   \int_0^T|Y_t|^2\,dt + \mathbb{E} \int_0^Tt |Z_t|^2\,dt \right\}^{1/2} . \end{aligned} $$

\item The \(\beta\)-norm is weaker than the sup-norm. Again, it suffices to
discuss the case of \(\beta = 0\). In that case, $$ \begin{aligned}
   \|(Y,Z)\|_\beta &\leq \left\{ T\|(Y, Z)\|^2_\text{sup} + \|(Y,
   Z)\|^2_\text{sup} \right\}^{1/2} \\ &= \sqrt{T+1}
   \|(Y,Z)\|_\text{sup}. \end{aligned} $$

\item The sup-norm is weaker than the standard norm. Noting that
\(\mathbb{E}|Y_t|^2 \leq \mathbb{E} \sup_{t\in[0,T]}|Y_t|^2\) holds for any \(t \in
   [0,T]\). Thus,
   $$ \begin{aligned}
   \|(Y,Z)\|_\text{sup}  &\leq \left\{
   \sup_{t\in[0,T]} \mathbb{E} |Y_t|^2 + \sup_{t\in[0,T]} \mathbb{E}\int_t^T |Z_s|^2\,ds
   \right\}^{1/2} \\
    &\leq \left\{ \mathbb{E} \sup_{t\in[0,T]} |Y_t|^2 +
   \mathbb{E}\int_0^T |Z_s|^2\,ds \right\}^{1/2}. \end{aligned}
   $$
\end{enumerate}
To conclude, the \(\mu\)-norm is the weakest one.
\end{proof}

\subsection{Proof of Lemma~\ref{lemma:contraction-under-a-norm-equivalent-to-mu-norm}}
\label{appendix:contraction-property-of-Picard-operator-for-BSDEs}

We follow a routine approach to analyze the Lipschitz constant of \(\Phi\)
under certain variants of \(\mu\)-norm for BSDE~\eqref{eq:BSDE}.

Let \((\tilde{y},\tilde{z}),(\bar{y}, \bar{z})\in \mathcal{M}[0,T]\). Let
$$ \begin{aligned} (\widetilde{Y},\widetilde{Z}) &:=
\Phi(\tilde{y},\tilde{z}), &\quad (\overline{Y}, \overline{Z}) &:=
\Phi(\bar{y},\bar{z}), \\ \widehat{Y} &:= \widetilde{Y} - \overline{Y},
&\quad \widehat{Z} &:= \widetilde{Z} - \overline{Z}, \\ \hat{y} &:=
\tilde{y} - \bar{y}, &\quad \hat{z}&:= \tilde{z} - \bar{z},
\end{aligned} $$ and \(\hat{f}_t := f(t,\tilde{y}_t,\tilde{z}_t) -
f(t,\bar{y}_t,\bar{z}_t)\). Then, \(\widehat{Y}\) satisfies the following
SDE $$ d\widehat{Y}_t = -\hat{f}_t\,dt + \widehat{Z}_t\,dW_t. $$
Let
\(\beta \in \mathbb{R}\) to be chosen later. Applying Itô's formula to \(te^{2\beta
t}|\widehat{Y}_t|^2\) yields
$$ \begin{aligned}
    d(te^{2\beta t}|\widehat{Y}_t|^2) =&
\bigl[(1+2 \beta t)e^{2\beta t}|\widehat{Y}_t|^2 - 2t e^{2\beta t}\langle \hat{f}_t,
\widehat{Y}_t\rangle \\
&+ te^{2\beta t} |\widehat{Z}_t|^2\bigr]\,dt + 2te^{2\beta t} \langle
\widehat{Y}_t, \widehat{Z}_t\,dW_t\rangle.
\end{aligned} $$
Noting that \(\widehat{Y}_T=0\),
we have
\begin{equation}
\begin{aligned}
\label{eq:tmp24}
0 =& \int_0^T(1+2 \beta t)e^{2\beta t}|\widehat{Y}_t|^2\,dt-
\int_0^T2te^{2\beta t}\langle \hat{f}_t,\widehat{Y}_t\rangle\,dt\\
&+ \int_0^Tte^{2\beta t}|\widehat{Z}_t|^2\,dt +\int_0^T 2 te^{2\beta t}\langle\widehat{Y}_t, \widehat{Z}_t\,dW_t\rangle.
\end{aligned}
\end{equation}

Observe that the stochastic integral vanishes after taking expectation
as the local martingale \(M:=\{\int_0^t se^{2\beta s}\langle \widehat{Y}_s,
\widehat{Z}_s\,dW_s\rangle\}_{0 \leq t \leq T}\) is actually a martingale. To
verify this fact, it suffices to check that \(\sup_{t\in[0,T]}|M_t|\) is
integrable \cite[p. 8]{phamContinuoustimeStochasticControl2009}.
The standard argument using the Burkholder-Davis-Gundy inequality implies that
$\mathbb{E}\Bigl[\sup_{t\in[0,T]} |M_t|\Bigr] < \infty$.

After taking expectation on both sides of \eqref{eq:tmp24}, the stochastic
integral vanishes and $$ \begin{aligned}
& \mathbb{E} \int_0^T(1+2 \beta t)e^{2\beta
t}|\widehat{Y}_t|^2\,dt+\mathbb{E}\int_0^Tte^{2\beta t}|\widehat{Z}_t|^2\,dt \\
&\leq 2\mathbb{E}\int_0^Tte^{2\beta
t}|\hat{f}_t|\cdot |\widehat{Y}_t|\,dt \\
&\leq 2L_f\mathbb{E}\int_0^Tte^{2\beta t}
(|\hat{y}_t| + |\hat{z}_t|) \cdot |\widehat{Y}_t|\,dt \\
&\leq 2L_f\mathbb{E}\int_0^T
te^{2\beta t}\biggl(C_y|\widehat{Y}_t|^2 + \frac{|\hat{y}_t|^2}{4C_y} +
C_z|\widehat{Y}_t|^2 + \frac{|\hat{z}_t|^2}{4C_z}\biggr)\,dt
\\
&=2L_f(C_y+C_z) \mathbb{E}\int_0^T te^{2\beta t}|\widehat{Y}_t|^2\,dt + \frac{1}{2}
\mathbb{E}\int_0^T \frac{tL_f}{C_y} e^{2\beta t}|\hat{y}_t|^2\,dt\\
&\quad+ \frac{1}{2} \mathbb{E}\int_0^T
\frac{L_f}{C_z} te^{2\beta t}|\hat{z}_t|^2\,dt. \end{aligned} $$ Here,
\(C_y\) and \(C_z\) are arbitrary positive constants. Set \(\beta =
L_f(C_y+C_z)\) with \(C_z=L_f\) and \(C_y=TL_f\). Then, $$ \mathbb{E} \int_0^Te^{2\beta
t}\Bigl(|\widehat{Y}_t|^2+t|\widehat{Z}_t|^2\Bigr)\,dt \leq \frac{1}{2}\mathbb{E}
\int_0^Te^{2\beta t}\Bigl(|\hat{y}_t|^2+t|\hat{z}_t|^2\Bigr)\,dt. $$ This
suggests that \(\Phi\) is a strict contraction under a certain variant of
the \(\mu\)-norm: there exists a norm equivalent to \(\mu\)-norm, denoted
by \(\|\cdot\|_{\mu(\beta)}\), such that for any \((\tilde{y},\tilde{z}),(\bar{y},
\bar{z})\in \mathcal{M}[0,T]\), $$ \|\Phi(\tilde{y},\tilde{z}) - \Phi(\bar{y},
\bar{z})\|_{\mu(\beta)} \leq \frac{1}{2} \|(\tilde{y},\tilde{z})-(\bar{y},
\bar{z})\|_{\mu(\beta)}. $$
\subsection{More details of Numerical Examples}
\label{appendix:more-details-of-numerical-examples}
We provide more details of numerical examples omitted in the main body.

\textbf{More details of Example~\ref{example:a-toy-BSDE-revision-1} and Example~\ref{example:a-coupled-FBSDE-revision-1}}.
A reasonably good Monte Carlo estimation with small confidence-intervals may require enough samples. In Figure~\ref{fig:visualize-BML-toy-BSDE}, the empirical BML value for each pair $(\theta_1,\theta_2)$ is estimated using $10^6$ Monte Carlo samples. Below, we reproduce the same figure using only $10^3$ Monte Carlo samples; results are presented in Figure~\ref{fig:visualize-BML-toy-BSDE-1000-samples}.

Similarly, we reproduce Figure~\ref{fig:visualize-BML-coupled-FBSDE} using only $10^3$ Monte Carlo samples; results are presented in Figure~\ref{fig:visualize-BML-coupled-FBSDE-1000-samples}.

\textbf{More details of Example~\ref{example:a-toy-BSDE-revision-2}}. The analytical expression~\eqref{eq:theoretical-BML-toy-BSDE-parameterization-scheme-2} is calculated via Theorem~\ref{theorem:the-Picard-interpretations-of-BML-values}. Note that BSDE~\eqref{eq:a-toy-BSDE} is simple enough such that $\Phi(\tilde{y},\tilde{z})$ is exactly the true solution $(Y^*,Z^*)$ for any trial solution $(\tilde{y},\tilde{z})$. Therefore,
\begin{equation}
\label{eq:appendix-tmp-26}
\begin{aligned}
\operatorname{BML}(\theta_1,\theta_2)
=&
\| (\tilde{y}, \tilde{z}) - \Phi(\tilde{y}, \tilde{z}) \|_\mu^2
= \| (\tilde{y}, \tilde{z}) - (Y^*,Z^*) \|_\mu^2 \\
=& \mathbb{E}\int_0^T|W_t|^4\Bigl(\theta_1|W_t|^2- \frac{1}{d}
\Bigr)^2\,dt\\
&+ \mathbb{E}\int_0^T t|W_t|^2\Bigl(\theta_2|W_t|^2-
\frac{2}{d}\Bigr)^2\,dt \\
=& \ell_y^*(\theta_1;\|\cdot\|_\mu) + \ell_z^*(\theta_2;\|\cdot\|_\mu),
\end{aligned}
\end{equation}
where $\ell^*_y(\theta_2;\|\cdot\|_\mu)$ and
$\ell^*_z(\theta_2;\|\cdot\|_\mu)$ are quadratic functions given by
(noting $\mathbb{E}|W_t|^{2k} = d(d+2)\cdots(d+2k-2)
t^k$).
A direct calculation shows that these quadratic functions achieve
minimum at $\theta_1^*= \frac{5}{4d(d+6)T}$ and $\theta_2^*=
\frac{5}{2d(d+4)T}$.

We note that the empirical BML line in Figure~\ref{fig:optimize-BML-toy-BSDE} should align with this analytical expression, and have exactly the same minimum. However, as we use only $10^3$ Monte Carlo samples at each gradient step when estimating empirical BML, the obtained empirical line contains too much noise (c.f. Figure~\ref{fig:visualize-BML-toy-BSDE} and Figure~\ref{fig:visualize-BML-toy-BSDE-1000-samples}). In fact, the exact BML value decreases smoothly even using inaccurate gradient estimations. We recreate Figure~\ref{fig:optimize-BML-toy-BSDE} with the right panel showing exact BML values obtained from the analytical expression \eqref{eq:appendix-tmp-26}; results are presented in Figure~\ref{fig:optimize-BML-toy-BSDE-theoretical-BML}.

\textbf{More details of Example~\ref{example:a-coupled-FBSDE-revision-2}}.
While Figure~\ref{fig:optimize-BML-coupled-FBSDE} shows the evolution
of norms during the optimization process, we can also visualize the
mean square errors $\mathbb{E}|\tilde{y}_t - Y^*_t|^2$ and
$\mathbb{E}|\tilde{z}_t - Z^*_t|^2$ as functions of time $t$ at a particular
optimization step. These error ``paths'' provide more information into
the performance of $(\tilde{y},\tilde{z})$ than aggregate errors
calculated from norms.

The sample paths of the trial solution $(\tilde{y},\tilde{z})$ are
constructed as follows. First, we run the optimization process
described in Example~\ref{example:a-coupled-FBSDE-revision-2},
yielding neural networks $(\tilde{y}^{\theta}, \tilde{z}^{\theta})$.
This optimization is repeated independently $50$ times, thus producing
a collection $\{(\tilde{y}^{\theta_k},
\tilde{z}^{\theta_k})\}_{k=1}^{50}$. Next, $1000$ independent sample
paths of the driving Brownian motion $W$, denoted by
$\{W^{(j)}\}_{j=1}^{1000}$, are generated. For each path $W^{(j)}$,
and for each neural network parameter set $\theta_k$, we simulate the
corresponding trial solutions $(\tilde{y}^{(j),k}, \tilde{z}^{(j),k})$
according to the parameterization scheme specified in
Example~\ref{example:a-coupled-FBSDE-revision-2}. Finally, we average
the trial solutions over $k$ and regard $\{(\tilde{y}^{(j)},
\tilde{z}^{(j)})\}_{j=1}^{1000}$ as the final sample paths of
$(\tilde{y},\tilde{z})$.

\begin{figure}[h]
  \centering
  \includegraphics[width=.45\textwidth]{./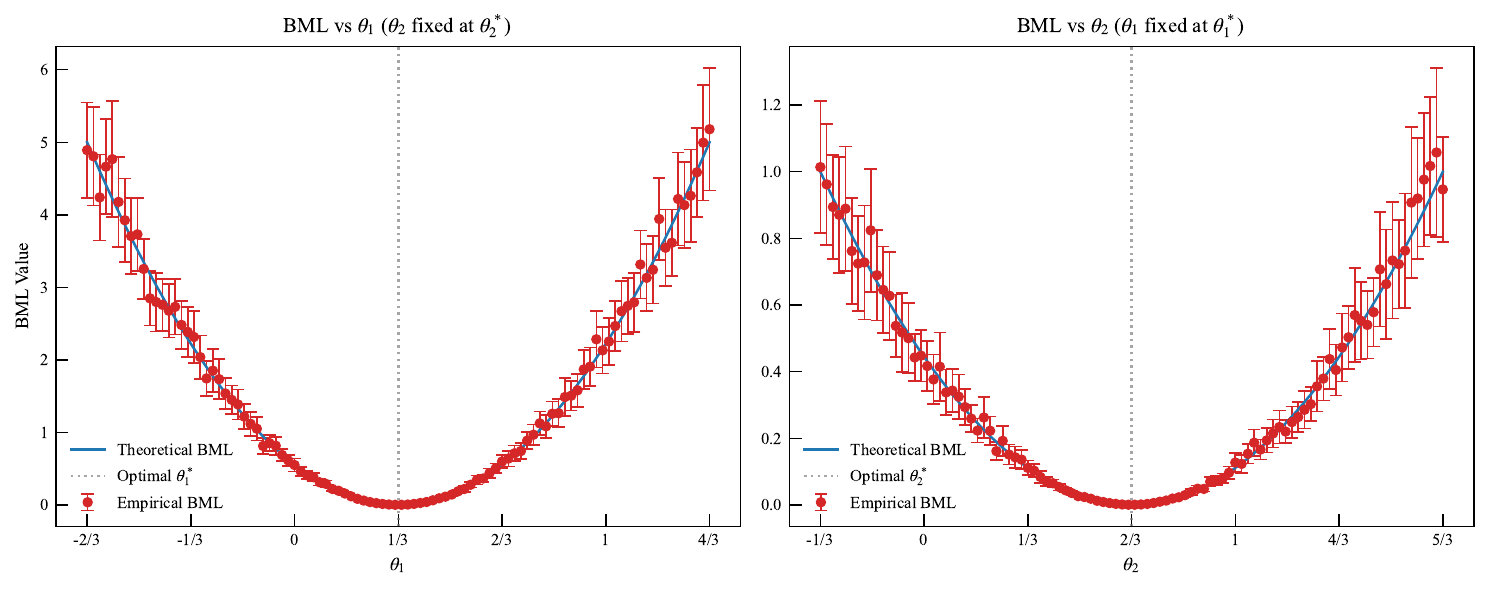}
  \caption{\label{fig:visualize-BML-toy-BSDE-1000-samples}Reproduce Figure~\ref{fig:visualize-BML-toy-BSDE} using only $10^3$ Monte Carlo samples when estimating empirical BML.
  }
\end{figure}

\begin{figure}[h]
  \centering
  \includegraphics[width=.45\textwidth]{./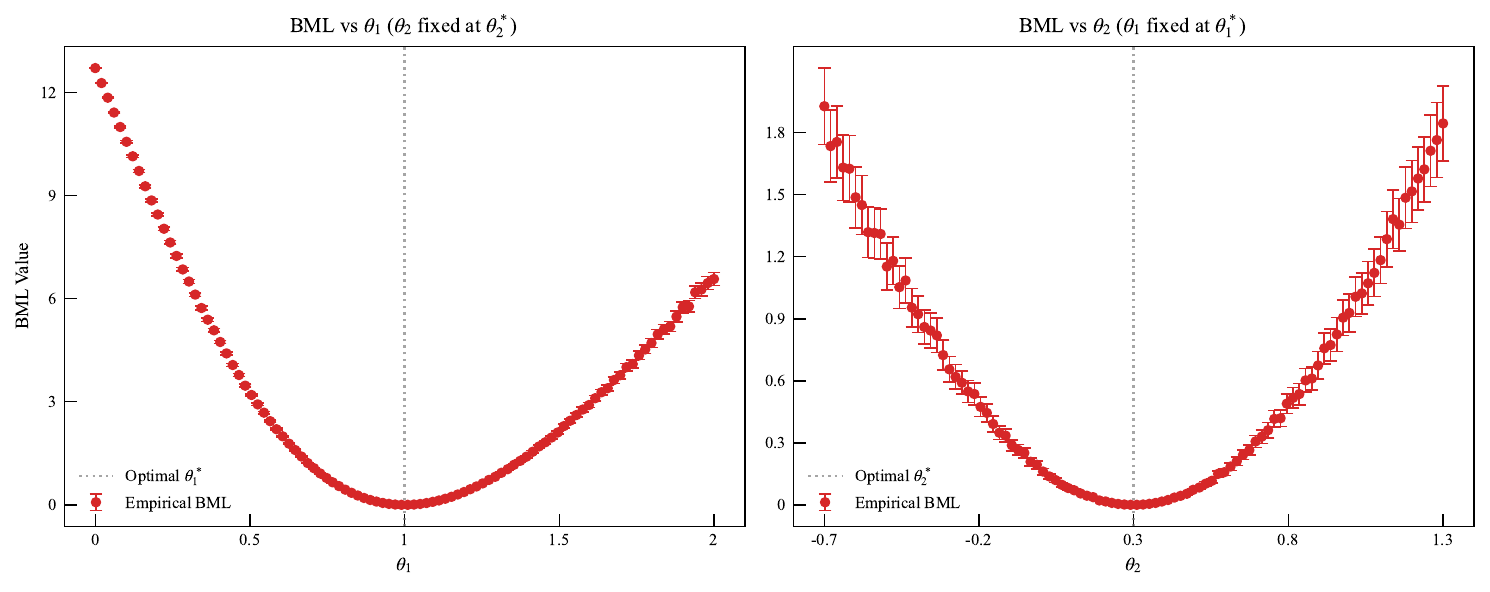}
  \caption{\label{fig:visualize-BML-coupled-FBSDE-1000-samples}Reproduce Figure~\ref{fig:visualize-BML-coupled-FBSDE} using only $10^3$ Monte Carlo samples when estimating empirical BML.
  }
\end{figure}

\begin{figure}[h]
  \centering
  \includegraphics[width=.45\textwidth]{./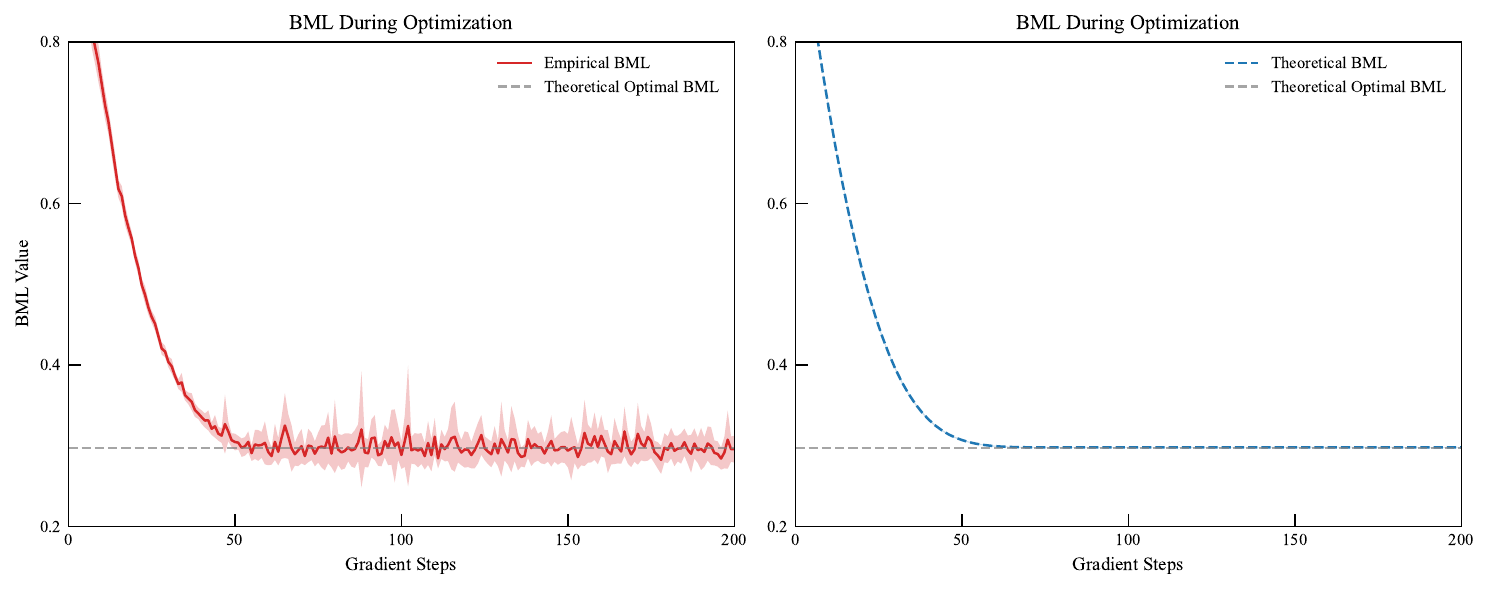}
  \caption{\label{fig:optimize-BML-toy-BSDE-theoretical-BML}Reproduce
    Figure~\ref{fig:optimize-BML-toy-BSDE} with the right panel
    showing theoretical BML values during the optimization. Metrics
    averaged over 50 independent runs; shaded regions indicate $\pm 3$
    standard errors. The shaded region of the theoretical line is
    nearly invisible due to small standard errors.}
\end{figure}

It is important to note that the parameterization scheme in
Example~\ref{example:a-coupled-FBSDE-revision-2} does not specify the
time discretization used during training or evaluation. This
flexibility allows the neural networks to be trained with a relatively
coarse temporal grid while the resulting trial solutions
$(\tilde{y},\tilde{z})$ are evaluated and visualized with a much finer
temporal resolution. In this example, the neural networks are
trained using $20$ time intervals, whereas the error paths are
visualized on a grid with $1000$ time intervals; results are presented
in Figure~\ref{fig:error-path-coupled-FBSDE}.

\textbf{More details of Example~\ref{example:a-HJB-decoupled-FBSDE}}. We reproduce Figure~\ref{fig:optimize-BML-HJB-decoupled-FBSDE} and Table~\ref{table:Y0-error-HJB-decoupled-FBSDE} after training a total of 4000 gradient steps; results are presented in Figure~\ref{fig:optimize-BML-HJB-decoupled-FBSDE-4k} and Table~\ref{table:Y0-error-HJB-decoupled-FBSDE-4k}. Again, we observe that smaller BML values correspond to smaller relative errors.

\begin{figure}[h]
  \centering
  \includegraphics[width=.45\textwidth]{./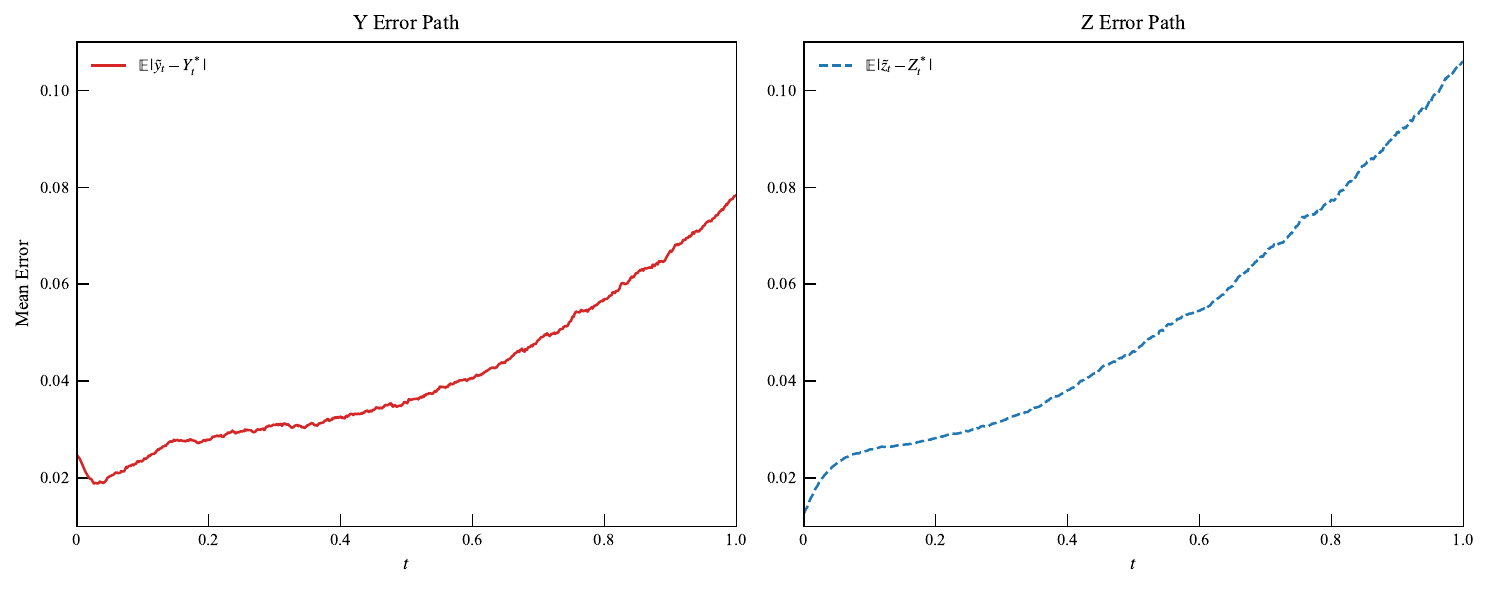}
  \caption{\label{fig:error-path-coupled-FBSDE} Error paths for
    Example~\ref{example:a-coupled-FBSDE-revision-2} between the trial
    solution $(\tilde{y},\tilde{z})$ and the true solution
    $(Y^*,Z^*)$. Left: mean squared $\mathbb{E}|\tilde{y}_t -
    Y^*_t|^2$ for $Y$. Right: mean squared error
    $\mathbb{E}|\tilde{z}_t - Z^*_t|^2$ for $Z$.}
\end{figure}

\begin{figure}[h]
  \centering
  \includegraphics[width=.45\textwidth]{./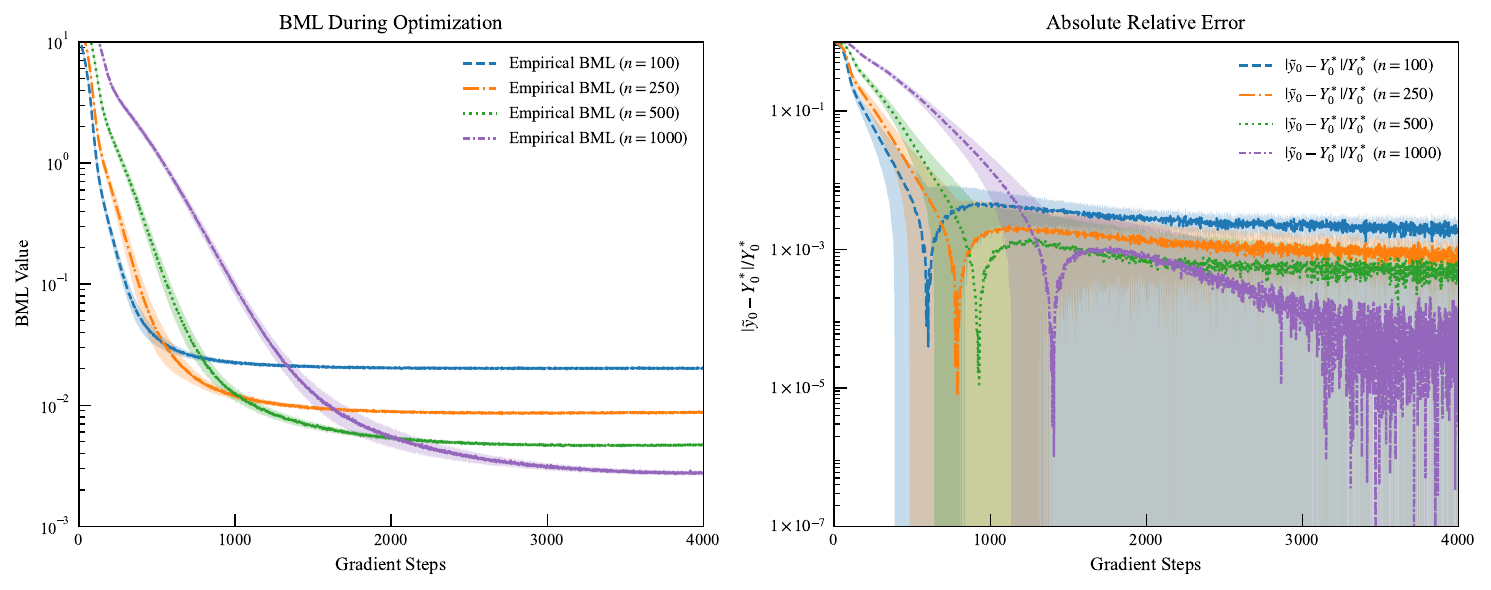}
  \caption{\label{fig:optimize-BML-HJB-decoupled-FBSDE-4k}Reproduce
    Figure~\ref{fig:optimize-BML-HJB-decoupled-FBSDE} after training a
    total of 4000 gradient steps. Left: empirical BML values. Right:
    relative errors $|\tilde{y}_0-Y_0^*|/Y_0^*$. Metrics averaged over
    50 independent runs; shaded regions indicate $\pm 3$ standard
    errors.}
\end{figure}

\begin{table}[h]
  \centering
\caption{Reproduce Table~\ref{table:Y0-error-HJB-decoupled-FBSDE}
  after training a total of 4000 gradient steps.}
\label{table:Y0-error-HJB-decoupled-FBSDE-4k}
\begin{tabular}{ccccc}
\toprule
Dimension & Final BML & Optimal cost & Prediction & Rel. error \\
($n$) & value & ($Y_0^*$) & ($\tilde{y}_0$) & ($|\tilde{y}_0 - Y_0^*|/Y_0^*$) \\
\midrule
100 & 0.02027 & 4.59016 & 4.59831 & 0.178\% \\
250 & 0.00871 & 5.51545 & 5.52012 & 0.084\% \\
500 & 0.00471 & 6.21161 & 6.21520 & 0.058\% \\
1000 & 0.00271 & 6.90626 & 6.90623 & 0.0004\% \\
\bottomrule
\end{tabular}
\end{table}

\section*{References}
\bibliographystyle{IEEEtran}
\bibliography{solveFBSDE}

\end{document}